\theoremstyle{definition}
\newtheorem{theorem}{Theorem}[section]
\newtheorem{remark}[theorem]{Remark}
\newtheorem{lemma}[theorem]{Lemma}
\newtheorem{corollary}[theorem]{Corollary}
\newtheorem{definition}[theorem]{Definition}
\newtheorem*{proof of claim}{Proof of Claim}
\newcounter{claimcounter}
\newtheorem*{claim}{Claim \thetheorem.\theclaimcounter}
\DeclareMathOperator{\ATRo}{\mathsf{ATR}_0}
\newcommand{\ACAo}{\mathsf{ACA}_0}
\newcommand{\RCAo}{\mathsf{RCA}_0}
\newcommand{\CAo}{\mathsf{CA}_0}
\newcommand{\SDC}{\mathsf{SDC}}
\newcommand{\ACA}{\mathsf{ACA}}
\DeclareMathOperator{\HJ}{HJ}
\newcommand{\N}{\mathbb{N}}
\renewcommand{\L}{\mathcal{L}}
\newcommand{\M}{\mathcal{M}}
\renewcommand{\phi}{\varphi}
\newcommand{\RFN}{\mathsf{RFN}}
\newcommand{\Con}{\mathsf{Con}}
\newcommand{\B}{\mathsf{B}}
\newcommand{\hyp}{\mathchar`-}
\newcommand{\Th}{\mathrm{Th}}
\newcommand{\Rfn}{\mathsf{Rfn}}
\begin{document}

\title{On some subtheories of Strong Dependent Choice}
\author{JUAN P. AGUILERA}
\address{Institute~of~discrete~mathematics~and~geometry, TU~Wien. Wiedner~Hauptstrasse~8-10, 1040~Vienna, Austria.}
\email{aguilera@logic.at}
\author{YUDAI SUZUKI}
\address{National~Institute~of~Technology, Oyama~College, Nakakuki~771, Oyama, Tochigi, Japan}
\email{yudai.suzuki.research@gmail.com}
\author{KEITA YOKOYAMA}
\address{Mathematical~Institute, Tohoku~University, Aramaki~Aza-Aoba~6-3, Aoba-ku, Sendai, Miyagi, Japan}
\email{keita.yokoyama.c2@tohoku.ac.jp}

\date{\today}

\begin{abstract}
  In this paper, we give characterizations of
  the set of $\Pi^1_{e}$-consequences, $\Sigma^1_{e}$-consequences and $\B(\Pi^1_{e})$-consequences of
  the axiomatic system of the strong dependent choice for $\Sigma^1_i$ formulas $\Sigma^1_i\hyp\SDC_0$ for $i > 0$ and $e < i+2$. 
  Here, $\B(\Gamma)$ denotes the set generated by $\land,\lor,\lnot$ starting from $\Gamma$.
\end{abstract}

\maketitle

\section{Introdcution}
In the proof theoretic study of formal systems of arithmetic,
we are interested  not only in  the whole set of the sentences provable from a theory, but also some certain subsets of it.
For example, the consistency strength of a theory is characterized by its $\Pi^0_1$-consequences,
the proof theoretic ordinal is naturally defined by its $\Pi^1_1$-consequences, and
its $\Pi^1_2$-consequences corresponds to its proof theoretic dilator.

It is also important in reverse mathematics to study the set of consequences of a theory with restricted complexity.
Recently, the $\Pi^1_2$-consequences of $\Pi^1_1\hyp\CAo$ have been studied thoroughly
since $\Pi^1_1\hyp\CAo$ is not axiomatizable by any $\Pi^1_2$ sentence.
The first giant leap for the study of $\Pi^1_2$-consequences of $\Pi^1_1\hyp\CAo$ is achieved by Towsner \cite{Townser_TLPP}.
He introduced new $\Pi^1_2$ principles whose logical strength are between $\Pi^1_1\hyp\CAo$ and $\ATRo$.
The second and third author extended Towsner's work and gave a hierarchy which covers the set of $\Pi^1_2$ consequences of $\Pi^1_1\hyp\CAo$ \cite{suzuki_yokoyama_pi12}.

In this paper, we generalize their work in \cite{suzuki_yokoyama_pi12}.
For more detail, we will give a characterization of the $\Pi^1_e$-, $\Sigma^1_e$- and $\B(\Pi^1_e)$-consequences of the system
strong $\Sigma^1_i$ dependent choice ($\Sigma^1_i\hyp\SDC_0$) where
$\B(\Pi^1_e)$ is the set of boolean combinations of $\Pi^1_e$ formulas.
Since $\Sigma^1_i\hyp\SDC_0$ is $\Pi^1_4$-conservative over $\Pi^1_i\hyp\CAo$, we also have a characterization of
$\Pi^1_e$-, $\Sigma^1_{e'}$- and $\B(\Pi^1_{e'})$-consequences of $\Pi^1_i\hyp\CAo$ for $e \leq 4$ and $e' \leq 3$.
We note that the characterization of $\B(\Pi^1_e)$-consequences can be seen as an analogue of the study of the syntactic local reflection principles in first-order arithmetic by Beklemishev \cite{Beklemishev_local_reflection}.

We give a general idea used throughout this paper. 
It is known that $\Sigma^1_i\hyp\SDC_0$ is characterized by the $\beta_i$-model reflection principle (see Theorem \ref{thm equiv of SDC and beta}).
Therefore, by extracting a suitable property of $\beta_i$-models, we will get a corresponding subtheory of $\Sigma^1_i\hyp\SDC_0$.
For proving a correspondence, we use the Barwise-Schlipf argument.
That is, we make a model $\M$ from a given $T \subseteq \Sigma^1_i\hyp\SDC_0$ and a $T$ non-provable sentence $\sigma$ such that
 $\M$ has a $\beta_i$-submodel $\M'$ satisfying $\Sigma^1_i\hyp\SDC_0$
by the compactness theorem.
Then, $\M'$ works as a witness for the non-provability of $\sigma$ from $\Sigma^1_i\hyp\SDC_0$.

In Section 2, we give a characterization of the $\Pi^1_e$-consequences of $\Sigma^1_{i}\hyp\SDC_0$ for $i > 0$ and $e < i +2$.
We note that the main result of this section (Theorem \ref{thm main result of sec 2}) is essentially mentioned in
\cite{pacheco2022determinacy}, but our proof is more informative for computability-theoretic aspects (Lemma \ref{lemma, number of beta}).
We also give a characterization of the $\B(\Pi^1_{i+1})$- and $\Sigma^1_{i+1}$-consequences of $\Sigma^1_i\hyp\SDC_0$ in Section 3.
We then combine this result and the results in Section 2, and give a finer analysis for the $\Pi^1_{e}$-consequences of $\Sigma^1_i\hyp\SDC_0$ for
$e \leq i$.
By modifying the proofs in Section 3, we give a characterization of the $\B(\Pi^1_{e+1})$- and $\Sigma^1_{e+1}$-consequences of $\Sigma^1_i\hyp\SDC_0$ for $e < i$ in Section 4.

\subsection*{Acknowledgements}
Aguilera would like to acknowledge support from the Austrian Science Foundation through grant STA-139.
Yokoyama's work is partially supported by JSPS KAKENHI grant numbers JP19K03601, JP21KK0045 and JP23K03193.

\section{$\Pi^1_e$ formulas provable from $\Sigma^1_i\hyp\SDC_0$}
In this section, we give a characterization of the set $\{\sigma \in \Pi^1_e :\Sigma^1_i\hyp\SDC_0 \vdash \sigma\}$
for $i > 0, e < i +2$.
As noted in the introduction, we use a characterization of $  \Sigma^1_i\hyp\SDC$ based on coded $\beta_i$-models for this characterization.
We begin with introducing $\Sigma^1_i\hyp\SDC_0$.
We refer the reader to Simpson's textbook \cite{Simpson} for the basic definitions.
We note that in \cite{Simpson}, the system $\Sigma^1_i\hyp\SDC_0$ is denoted
as $\mathsf{Strong}\, \Sigma^1_i\hyp\mathsf{DC}_0$.

\begin{definition}[sequence of sets, $\RCAo$]\label{def seq of sets}
  Let $(\bullet,\bullet) :\N^2 \to \N$ be a fixed primitive recursive pairing function.
  For $X \subseteq \N$ and $x \in \N$, we define the $x$-th segment $X_x = \{y : (x,y) \in \N\}$.
  Then, $X$ is regarded as the sequence $\langle X_x \rangle_{x \in \N}$.
  In this sense, we write $A \in X$ to mean $\exists x(A = X_x)$.
  For a sequence $\langle X_n \rangle_{n \in \N}$ of sets, we define
  $X_{<k}$ as the sequence $\langle X_0,\ldots,X_{k-1} \rangle$.
\end{definition}

\begin{definition}
  Let $\varphi$ be a formula. We define strong dependent choice for $\varphi$ as follows.
  \begin{align*}
    \varphi\hyp\SDC \equiv \forall X \exists \langle Y_n \rangle_n
    (Y_0 = X \land \forall n >0 (\exists Z \varphi(Y_{<n},Z) \to \varphi(Y_{<n},Y_{n}))).
  \end{align*}
  We define the axiomatic system $\Sigma^1_i\hyp\SDC_0$ as $\ACA_0 + \{\varphi\hyp\SDC : \varphi \in \Sigma^1_i\}$.
\end{definition}

We next see the definition of $\beta_i$-models, $\beta_i$-submodels and coded $\beta_i$-models.
Intuitively, a $\beta_i$-model is a $\Sigma^1_i$-elementary submodel of $(\omega,\mathcal{P}(\omega))$.

\begin{definition}
  An $\L_2$ structure $\M$ of the form $\M = (\omega,S)$ is called an $\omega$-model.
  Let $\M=(\omega,S)$ be an $\omega$-model and $i \in \omega$. We say $\M$ is a $\beta_i$-model if
    $\M \models \sigma \Leftrightarrow (\omega,\mathcal{P}(\omega)) \models \sigma$
     for any $\Sigma^1_i$ formula $\sigma$ with parameters from $\M$.
\end{definition}

The notions of $\omega$-models and $\beta_i$-models can be extended to a relation of two $\L_2$ structures
who share the first-order part.

\begin{definition}
  Let $\M = (\N^{\M},S^{\M})$ and $\M' = (\N^{\M'},S^{\M'})$ be $\L_2$ structures.
  We say $\M'$ is a $\omega$-submodel of $\M$ if $\N^{\M'} = \N^{\M}$ and $S^{\M'} \subseteq S^{\M}$.

  Let $i \in \omega$.
  We say $\M'$ is a $\beta_i$-submodel of $\M$ if $\M'$ is an $\omega$-submodel of $\M$ and
  $\M' \models \sigma \Leftrightarrow \M \models \sigma$
  for any $\Sigma^1_i$ formula $\sigma$ with parameters from $\M'$.
\end{definition}

If $S = \{S_n : n \in \omega\} \subseteq \mathcal{P}(\omega)$ is countable,
then $S$ can be coded by a single set $S_C \subseteq \omega$ by putting $S_C = \{(n,s) : s \in S_n\}$ as in Definition \ref{def seq of sets}.
This idea can be generalized to the nonstandard models.
We call a model $\M=(\N^{M},S_C)$ is a coded $\omega$-submodel in $\M' = (\N^{\M'},S')$ 
when $\N^{\M} = \N^{\M'}$ and $S_C \in S'$.
If $\M$ and $\M'$ satisfy the same $\Sigma^1_i$ sentences, then $\M$ is called a coded $\beta_i$-submodel in $\M'$.
Throughout this paper, we omit the word \textit{coded} if it is clear from the context.

\begin{definition}[$\ACAo$]
  Let $\M = \langle M_x \rangle_x$ be a coded $\omega$-model.
  We say $\M$ is a $\beta_i$-model if
  \begin{align*}
    \sigma \leftrightarrow \M \models \sigma
  \end{align*}
  for any $\Pi^1_i$ formula $\sigma$ having parameters from $\M$.
\end{definition}
\begin{remark}
  We give a few comments on the above definition.
  \begin{enumerate}
    \item In this paper, we may think that $\M \models \sigma$ is the abbreviation of `the relativization $\sigma^{\M}$ is true' because 
    our base theory is sufficiently strong. For the difference of $\M \models \sigma$ in the sense that an evaluation for $\sigma$ exists and $\sigma^{\M}$, see Simpson's textbook and Remark 2.15 in \cite{suzuki_yokoyama_pi12}.
    \item A $\beta_0$-model is just the same as an $\omega$-model.
    \item Let $i > 0$. Then, the condition $\M$ is a $\beta_i$-model is equivalent to
    \begin{align*}
      (\M \models \sigma) \to \sigma  \text{ for any } \Pi^1_{i} \text{ formula.}
    \end{align*}
    Therefore, using a universal $\Pi^1_{i}$ formula, we can find a $\Pi^1_i$ formula stating that `$\M$ is a $\beta_i$-model'.
    Throughout this paper, we fix a $\Pi^1_i$ formula $\beta_i(\M)$ stating `$\M$ is a $\beta_i$-model'.
  \end{enumerate}
\end{remark}

We recall that $\Sigma^1_i\hyp\SDC_0$ is characterized by $\beta_i$-models.
\begin{theorem}\label{thm equiv of SDC and beta}
  \cite[VII.7.4. and VII.7.6.]{Simpson}
  For $i> 0$, the following assertions are equivalent over $\ACAo$.
  \begin{enumerate}
    \item $\Sigma^1_i\hyp\SDC_0$,
    \item the $\beta_i$-model reflection: $\forall X \exists \M (X \in \M \land \beta_i(\M))$,
    \item $\forall X(\sigma(X) \to \exists \M(X \in \M \land \beta_i(\M) \land \M \models \sigma(X)))$ for $\sigma \in \Sigma^1_{i+2}$.
  \end{enumerate}
\end{theorem}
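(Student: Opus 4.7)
The plan is to prove the cycle $(3) \Rightarrow (2) \Rightarrow (1) \Rightarrow (3)$. The implication $(3) \Rightarrow (2)$ is immediate: apply (3) to any trivially true $\Sigma^1_{i+2}$ sentence, e.g.\ $\exists Y\, (Y = X)$.

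For $(2) \Rightarrow (1)$, I fix a $\Sigma^1_i$ formula $\varphi$ and a set $X$, and apply $\beta_i$-model reflection to obtain a coded $\beta_i$-model $\M$ with $X \in \M$. Since $i \geq 1$, the model $\M$ satisfies $\ACAo$ and is closed under coding of finite sequences. I then define the SDC sequence $\langle Y_n\rangle_n$ externally by recursion: set $Y_0 = X$, and at stage $n > 0$ let $Y_n$ be the element of $\M$ of least $\M$-index such that $\M \models \varphi(Y_{<n}, Y_n)$, taking a default element if none exists. The $\beta_i$-absoluteness of the $\Sigma^1_i$ statement $\exists Z\, \varphi(Y_{<n}, Z)$ between $\M$ and the ambient universe then guarantees that this sequence witnesses $\varphi\hyp\SDC$.

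The main substance is $(1) \Rightarrow (3)$. Given $\sigma(X)$ of the form $\exists Y\, \forall Z\, \psi(X, Y, Z)$ with $\psi \in \Sigma^1_i$, I fix a witness $Y$ and design a single $\Sigma^1_i$ formula $\varphi^*(W_{<n}, Z)$ that, using the pairing decomposition of $n$, enumerates all requirements of the form ``if $\exists Z\, \theta_m(W_{i_1}, \ldots, W_{i_k}, Z)$ holds, then $\theta_m(W_{i_1}, \ldots, W_{i_k}, Z)$ holds'', ranging over indices $m$ of $\Sigma^1_i$ formulas and parameter tuples from the $W_j$ available at stage $n$. Applying $\varphi^*\hyp\SDC$ with starting set $X \oplus Y$, I obtain a sequence $\langle W_n\rangle_n$ closed under witnesses for $\Sigma^1_i$ existentials with parameters from itself. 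A Tarski--Vaught style argument then shows that the coded $\omega$-model $\M$ determined by $\langle W_n\rangle_n$ (together with its arithmetic closure inside the ambient universe) is a $\beta_i$-model containing both $X$ and $Y$. Since $\forall Z\, \psi(X, Y, Z)$ holds in the universe and $\psi \in \Sigma^1_i$ is absolute, the relativized sentence $\sigma(X)^{\M}$ holds, yielding (3).

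The main obstacle is the careful setup of $\varphi^*$ in $(1) \Rightarrow (3)$: the pairing scheme must ensure that \emph{every} existential $\Sigma^1_i$ assertion whose parameters appear anywhere in the final sequence is asked at some stage strictly later than those parameters, and one must verify that the resulting closure under witnesses really does deliver $\Sigma^1_i$-elementarity rather than only a weaker satisfaction condition. This is standard once the bookkeeping is in place, and the details can be extracted from Simpson's proof in \cite[VII.7]{Simpson}.
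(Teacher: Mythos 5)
The paper gives no proof of this theorem---it is quoted directly from Simpson \cite[VII.7.4 and VII.7.6]{Simpson}---and your argument is a correct reconstruction of exactly that standard proof: trivial instance for $(3)\Rightarrow(2)$, witness extraction from a coded $\beta_i$-model for $(2)\Rightarrow(1)$, and the Tarski--Vaught/universal-$\Sigma^1_i$-formula bookkeeping via an SDC instance for $(1)\Rightarrow(3)$. The only detail worth making explicit is that the witness-closure must be fed through the Tarski--Vaught induction at every level $j\le i$ (which your setup does cover, since $\Pi^1_{j}\subseteq\Sigma^1_i$ for $j<i$), and that the recursion defining $\langle Y_n\rangle_n$ in $(2)\Rightarrow(1)$ is arithmetical in the code of $\M$, so the sequence exists by arithmetical comprehension.
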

Following these equivalences, it is shown that
$\Sigma^1_i\hyp\SDC_0$ is axiomatizable by a $\Pi^1_{i+2}$ sentence but
is not axiomatizable by any $\Sigma^1_{i+2}$ sentence.
In fact, it follows from $(3)$ that $\Sigma^1_i\hyp\SDC_0$ proves
$\sigma \to \Con(\ACAo + \sigma)$ for any $\Sigma^1_{i+2}$ sentence $\sigma$ where $\Con(\ACAo + \sigma)$ is the $\Pi^0_1$ formula asserting the consistency of $\ACAo + \sigma$.
Hence, if $\Sigma^1_i\hyp\SDC_0$ were axiomatized by a $\Sigma^1_{i+2}$ sentence $\sigma$,
then $\sigma$ would prove $\Con(\ACAo + \sigma)$. However, this contradicts G\"{o}del's second incompleteness theorem.

Since $\Sigma^1_i\hyp\SDC_0$ is finitely axiomatizable and not axiomatizable by any $\Sigma^1_{i+2}$ sentence,
the set $\Th_{\Pi^1_e}( \Sigma^1_i\hyp\SDC_0)$
 of $\Pi^1_{e}$-consequences of $ \Sigma^1_i\hyp\SDC_0$ is strictly weaker than $\Sigma^1_i\hyp\SDC_0$ for $e \leq i+1$. 

In the remaining of this section, we give a characterization of the set
$\Th_{\Pi^1_{e+2}}( \Sigma^1_i\hyp\SDC_0)$ based on a weaker variant of (2) in Theorem \ref{thm equiv of SDC and beta}
for $i > 1$ and $e \leq i+1$.
We note that $\Th_{\Pi^1_2}(\Sigma^1_1\hyp\SDC_0)$, which is equal to $\Th_{\Pi^1_2}(\Pi^1_1\hyp\CAo)$, has already been studied in \cite{suzuki_yokoyama_pi12}.

\begin{definition}
  Let $n > 0$.
  We define a formula $\beta^i(\M_0,\ldots,\M_n)$ by
  \begin{align*}
    \bigwedge_{m < n} (\M_m \in \M_{m+1}) \land \bigwedge_{m < n} \M_{n} \models \beta_i(\M_m).
  \end{align*}
  We also define a formula $\beta^i_e(\M_0,\ldots,\M_n)$ by $\beta^i(\M_0,\ldots,\M_n) \land \beta_e(\M_n)$.
\end{definition}
We remark that if $\beta^i(\M_0,\ldots,\M_n)$, then $\M_{m'} \in \M_m$ and
$\M_m \models \beta_i(\M_{m'})$ for any $m' < m < n+1$.
If $\beta_e(\M_n)$ in addition, then $\beta_e(\M_m)$ for any $e \leq i$ and $m < n$.
In particular, $\beta^i_i(\M_0,\ldots,\M_n)$ means that
$\M_0 \in \cdots \in \M_n$ and $\beta_i(\M_m)$ for all $m$.

\begin{definition}
  Let $\sigma,\tau$ be sentences.
  For $i,e,n \in \omega$, we define $\beta^i_e\RFN(\sigma;n;\tau)$ as follows.
  \begin{align*}
    \beta^i_e\RFN(\sigma; n; \tau) \equiv &\forall X \exists \M_0,\ldots,\M_n \text{ with the following properties. } 
  \end{align*}
  \begin{itemize}
    \item $X \in \M_0$,
    \item $\beta^i_e(\M_0,\ldots,\M_n)$,
    \item $\M_m \models \ACAo$ for all $m < n+1$,
    \item $\M_0 \models \sigma$ and $\M_n \models \tau$.
  \end{itemize}
  We sometimes omit $\sigma$ or $\tau$ when they are trivial. For example,
  we write $\beta^i_e\RFN(n;\tau)$
  instead of $\beta^i_e\RFN(0=0;n; \tau)$.
\end{definition}
\begin{remark}
  When we work in a theory including $\ACAo$ and $e > 0$, then
  we may omit the condition $\M_m \models \ACAo$ from the above definition because
  $\ACAo$ proves that any $\beta_e$-model is a model of $\ACAo$.
\end{remark}

\begin{remark}
We note that $\beta^i_e\RFN(n)$ is first introduced in \cite[Section 2]{pacheco2022determinacy} as $\psi_e(i,n)$.
In that paper, it is shown that the syntactic reflection of $ \Sigma^1_i\hyp\SDC_0$ for $\Pi^1_{e+2}$ formulas is equivalent to $\forall n. \psi_e(i,n)$ over $\ACAo$.
In this section, we consider a standard analogue of this result.
We also note that the concept of $\beta^i_e\RFN(n;\tau)$ is introduced in \cite{suzuki_yokoyama_pi12}.
\end{remark}

By iterating (2) in Theorem \ref{thm equiv of SDC and beta} finitely many times,
$\Sigma^1_i\hyp\SDC_0$ proves
\begin{align*}
  \forall X \exists \M_0,\ldots,\M_n
  (X \in \M_0 &\land \beta^i_i(\M_0,\ldots,\M_n))
\end{align*}
for any $n \in \omega$.
Notice that $\beta^i_e\RFN(n)$ is a $\Pi^1_{e+2}$ approximation of the above sentence.

In \cite{suzuki_yokoyama_pi12}, it is proved that
the set $\{\beta^1_0\RFN(n) : n \in \omega\}$ characterizes the $\Pi^1_2$-consequences of $\Pi^1_1\hyp\CAo$.
The key point of this proof is that when a $\Pi^1_3$ sentence of the form $\forall X \exists Y \theta(X,Y)$ is provable from
$\Pi^1_1\hyp\CAo$, then there exists $n \in \omega$ such that
a solution $Y$ is computable from any $\M_0,\ldots,\M_n$ with $\beta^1_1(\M_0,\ldots,\M_n)$ and $X \in \M_0$.
We extend this result to more general cases.

\begin{lemma}\label{lem : closed under beta_i}
  Let $i \in \omega$ such that $i \geq 1$ and $\M = (\N^{\M},S^{\M})$ be a model of $\ACAo$.
  Let $S' \subseteq S^{\M}$ be such that
  $\M' = (\N^{\M},S')$ is a model of $\RCAo$ with the following property:
  for any $X \in S'$, there exists $Y \in S'$ such that
  $\M \models \beta_i(Y) \land (X \in Y)$.
  Then,
  \begin{itemize}
    \item $\M \models \beta_i(Y)$ implies $\M' \models \beta_i(Y)$ for any $Y \in S'$,
    \item $\M'$ is a model of $  \Sigma^1_i\hyp\SDC_0$, and
    \item $\M'$ is a $\beta_i$-submodel of $\M$.
  \end{itemize}
\end{lemma}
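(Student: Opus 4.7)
The plan is to establish a joint induction that yields (1) and (3) at once, and then derive (2) from (1) plus a closure argument. Specifically, I would prove by induction on $j \leq i$ the following two statements simultaneously:
\begin{enumerate}[label=(\alph*)]
\item $\M$ and $\M'$ agree on all $\Sigma^1_j$ formulas with parameters in $S'$;
\item for every $Y \in S'$, if $\M \models \beta_j(Y)$ then $\M' \models \beta_j(Y)$.
\end{enumerate}
The base case $j=0$ is immediate: arithmetic formulas are absolute between $\omega$-models sharing their first-order part, and $\beta_0$ is just being an $\omega$-model. For the step $j \Rightarrow j+1$, apply (a) to $\sigma \equiv \exists X\,\theta(X,\bar A)$ with $\theta \in \Pi^1_j$ and $\bar A \in S'$. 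The direction $\M'\models\sigma \Rightarrow \M\models\sigma$ follows from $S'\subseteq S^\M$ together with (a) at stage $j$ applied to $\theta$. For the converse, use the hypothesis of the lemma to pick $Y\in S'$ with $\bar A \in Y$ and $\M\models\beta_i(Y)$; since $i\geq j+1$, we have $\M\models\beta_{j+1}(Y)$, so $Y$ reflects $\sigma$ inside $\M$, yielding $X\in Y$ with $Y\models\theta(X,\bar A)$ in $\M$. The slice $X$ lies in $S'$ by $\Delta^0_1$-comprehension inside $\M'$; then $\M\models\beta_j(Y)$ upgrades $Y\models\theta(X,\bar A)$ to $\M\models\theta(X,\bar A)$, and (a) at stage $j$ moves this to $\M'\models\theta(X,\bar A)$, so $\M'\models\sigma$. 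For (b) at stage $j+1$, observe that the relativization $Y\models\sigma$ of a $\Pi^1_{j+1}$ formula is arithmetic in $Y$ and the indices of the parameters---set quantifiers ranging over slices of $Y$ collapse to first-order quantifiers over naturals---and hence absolute between $\M$ and $\M'$; combining this with (a) at stage $j+1$ applied to $\sigma$ itself transports the equivalence $\sigma\leftrightarrow(Y\models\sigma)$ from $\M$ to $\M'$.

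With the claim in hand, item (1) of the lemma is (b) at $j=i$ and item (3) is (a) at $j=i$. For item (2), Theorem~\ref{thm equiv of SDC and beta} reduces $\Sigma^1_i\hyp\SDC_0$ to $\ACAo$ plus $\beta_i$-model reflection. The latter holds in $\M'$ directly: for any $X\in S'$, the $Y$ supplied by the hypothesis satisfies $X\in Y$ (arithmetic, hence absolute) and, by item (1), $\M'\models\beta_i(Y)$. To see $\M'\models\ACAo$, fix an arithmetic $\phi(n,\bar A)$ with $\bar A\in S'$ and pick $Y\in S'$ with $\bar A\in Y$ and $\M\models\beta_i(Y)$; since $\M\models\ACAo$ and $Y$ is $\beta_i$ inside $\M$, we have $Y\models\ACAo$ in $\M$, so $\{n:\phi(n,\bar A)\}$ is recovered as some slice $Y_k$ of $Y$, and $\Delta^0_1$-comprehension in $\M'$ puts every slice of an element of $S'$ back into $S'$.

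The main obstacle is the bookkeeping in the inductive step (b) at $j+1$: one must verify carefully that ``$Y\models\sigma$'' really does collapse to an arithmetic statement in $Y$ even when $\sigma$ has nested set quantifiers, so that absoluteness between $\M$ and $\M'$ is unambiguous. Once that is granted, the rest of the argument is a short reflection-and-descent shuffle between the full-model truth in $\M$ or $\M'$ and the local truth inside $Y$, mediated at each level by the hypothesis's supply of $\beta_i$-covers inside $S'$.
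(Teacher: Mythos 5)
Your proof is correct and follows essentially the same route as the paper: an induction on the complexity level that simultaneously transfers $\beta$-model-hood from $\M$ down to $\M'$ and establishes the elementarity of $\M'$ in $\M$, using arithmetic absoluteness of the coded satisfaction relation $[Y\models\sigma]$ and the supply of $\beta_i$-covers in $S'$, then concluding $\Sigma^1_i\hyp\SDC_0$ via the reflection characterization of Theorem~\ref{thm equiv of SDC and beta}. The only cosmetic difference is that you ground the induction at $j=0$ with arithmetic absoluteness, whereas the paper inducts on $i$ itself and cites the case $i=1$ from earlier work.
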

\begin{proof}
  We prove the lemma by induction. The case $i = 1$ is proved in \cite{suzuki_yokoyama_pi12}.

  Let $i \in \omega$. Assume that for any $X \in S'$, there exists $Y \in S'$ such that
  $\M \models \beta_{i+1}(Y) \land (X \in Y)$.
  By the induction hypothesis, the following properties hold.
  \begin{enumerate}
    \item $\M \models \beta_i(Y)$ implies $\M' \models \beta_i(Y)$ for any $Y \in S'$, and
    \item $\M'$ is a $\beta_i$ submodel of $\M$.
  \end{enumerate}
  Let $Y \in S'$ such that $\M \models \beta_{i+1}(Y)$. We show that
  $\M' \models \beta_{i+1}(Y)$.
  Let $\varphi(X,Z)$ be a $\Pi^1_i$ formula and $X \in Y$ such that
  $\M' \models \exists Z\varphi(X,Z)$.
  Since $\M'$ is a $\beta_i$ submodel of $\M$, $\M \models \exists Z\varphi(X,Z)$.
  Then, $\M \models [Y \models \exists Z\varphi(X,Z)]$ because $\M \models \beta_{i+1}(Y)$.
  That is, there exists $Z \in Y$ such that $\M \models [Y \models \varphi(X,Z)]$.
  Then, $\M' \models [Y \models \varphi(X,Z)]$ because $Y \models \varphi(X,Z)$ is an arithmetical condition for $X,Y$ and $Z$.
  Thus, $\M' \models [Y \models \exists Z \varphi(X,Z)]$.

  We next show that $\M'$ is a $\beta_{i+1}$ submodel of $\M$.
  Let $\varphi(X,Z)$ be a $\Pi^1_i$ formula and $X \in \M'$ such that
  $\M \models \exists Z\varphi(X,Z)$.
  Let $Y \in \M'$ such that
  $\M \models \beta_{i+1}(Y) \land X \in Y$.
  Then, $\M \models [Y \models \exists Z\varphi(X,Z)]$ and hence
  $\M' \models [Y \models \exists Z\varphi(X,Z)]$.
  Therefore $\M' \models \exists Z \varphi(X,Z)$ because $\M' \models \beta_{i}(Y)$ by the induction hypothesis.

  Since $\M' \models \forall X \exists Y (X \in Y \land \beta_{i+1}(Y))$, $\M'$ is a model of
  $ \Sigma^1_{i+1}\hyp\SDC_0$.
\end{proof}

\begin{lemma}\label{lemma, number of beta}
  Let $\varphi$ be a $\Pi^1_i$ formula such that
  $\forall X \exists Y \varphi$ is provable from $ \Sigma^1_i\hyp\SDC_0$.
  Then, there exists an $n \in \omega$ such that $\ACAo$ proves that
  \begin{align*}
    \forall X,\M_0,\ldots,\M_n((X \in \M_0 \land \beta^i_i(\M_0,\ldots,\M_n)) \to \exists Y \in \M_n \varphi(X,Y)).
  \end{align*}
\end{lemma}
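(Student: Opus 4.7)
The plan is to argue contrapositively along the Barwise-Schlipf template described in the introduction. Assume that for every $n \in \omega$ the displayed implication is not a theorem of $\ACAo$; the goal is to build a model of $\Sigma^1_i\hyp\SDC_0$ in which $\forall X \exists Y\, \varphi(X,Y)$ fails, contradicting the hypothesis on $\varphi$.

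Expand the language of $\L_2$ by fresh set constants $X$ and $\M_m$ for each $m \in \omega$, and let $T$ be the theory consisting of $\ACAo$ together with the following axioms, for all appropriate $m, n$ in $\omega$: $X \in \M_0$; $\M_m \in \M_{m+1}$; $\M_n \models \beta_i(\M_m)$ for $m < n$; $\beta_i(\M_m)$; and $\forall Y \in \M_m\ \neg\varphi(X, Y)$. To verify finite satisfiability, I would pick any finite fragment of $T$, let $n$ dominate the indices appearing in it, and apply the failure of the $n$-th instance to obtain a model $\M^{(n)}$ of $\ACAo$ together with interpretations of the constants $X, \M_0^{(n)}, \ldots, \M_n^{(n)}$ satisfying $X \in \M_0^{(n)}$, $\beta^i_i(\M_0^{(n)}, \ldots, \M_n^{(n)})$, and the nonexistence of $Y \in \M_n^{(n)}$ with $\varphi(X, Y)$. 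The remark following the definition of $\beta^i$ supplies $\M_m^{(n)} \in \M_n^{(n)}$ for every $m < n$, and arithmetical comprehension inside $\M_n^{(n)}$ then places every member of $\M_m^{(n)}$ into $\M_n^{(n)}$, so the failure of $\varphi(X, \cdot)$ propagates to each $\M_m^{(n)}$. Similarly, because $\beta_i$ is $\Pi^1_i$ and $\M_n^{(n)}$ is a $\beta_i$-model, the clause $\M_n^{(n)} \models \beta_i(\M_m^{(n)})$ upgrades to $\beta_i(\M_m^{(n)})$ in $\M^{(n)}$. Thus $\M^{(n)}$ satisfies the fragment, and compactness provides a model $\M^*$ of $T$.

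Inside $\M^*$, set $S' = \{Y \in S^{\M^*} : \exists m \in \omega\ (\M^* \models Y \in \M_m)\}$ and $\M' = (\N^{\M^*}, S')$. Any two members of $S'$ jointly lie in some $\M_m$, so $S'$ is closed under $\Delta^0_1$-definability and $\M' \models \RCAo$. Moreover, for any $Y \in S'$ with $Y \in \M_m$, the constant $\M_{m+1}^{\M^*}$ belongs to $S'$, contains $Y$, and satisfies $\beta_i$ in $\M^*$. Hence the hypotheses of Lemma \ref{lem : closed under beta_i} are met, so $\M' \models \Sigma^1_i\hyp\SDC_0$ and $\M'$ is a $\beta_i$-submodel of $\M^*$. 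The provability assumption then yields $Y \in \M'$ with $\M' \models \varphi(X, Y)$; by $\Pi^1_i$-absoluteness, $\M^* \models \varphi(X, Y)$ as well. But this $Y$ lies in some $\M_m$, contradicting the corresponding axiom of $T$. I expect the main obstacle to be the bookkeeping for finite satisfiability, in particular the downward-closure of the $\M_m^{(n)}$ chain and the $\Pi^1_i$-absoluteness steps needed to recover each axiom of $T$ from the single instance furnished by the hypothesis.
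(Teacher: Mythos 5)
Your proposal is correct and follows essentially the same route as the paper: assume failure of every instance, build by compactness a model with an $\omega$-chain of constants forming a $\beta_i$-tower avoiding solutions to $\varphi(X,\cdot)$, cut down to the union of their members, and invoke Lemma \ref{lem : closed under beta_i} to obtain a $\beta_i$-submodel of $\Sigma^1_i\hyp\SDC_0$ refuting $\forall X\exists Y\varphi$. The extra bookkeeping you supply for finite satisfiability (downward propagation of $\lnot\varphi$ and of $\beta_i$ along the chain) is exactly what the paper leaves implicit, and it checks out.
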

\begin{proof}
  Let $\varphi$ be a $\Pi^1_i$ formula such that
  $  \Sigma^1_i\hyp\SDC_0 \vdash \forall X \exists Y \varphi$.
  Assume that $\ACAo$ does not prove
  \begin{align*}
    \forall X,\M_0,\ldots,\M_n((X \in \M_0 \land \beta^i_i(\M_0,\ldots,\M_n)) \to \exists Y \in \M_n \varphi(X,Y))
  \end{align*}
  for any $n \in \omega$. 
  That is, for any $n \in \omega$, $\ACAo$ is consistent with
  \begin{align*}
    \exists X,\M_0,\ldots,\M_n
    (&X \in \M_0 \land \beta^i_i(\M_0,\ldots,\M_n) \land \forall Y \in \M_n \lnot \varphi(X,Y)).
  \end{align*}
  Let $\L'$ be the language given by adding new set constant symbols $C$ and $D_0,D_1,\ldots$ into $\L_2$.
  Define an $\L'$-theory $T$ consisting of the following sentences:
  \begin{align*}
    &\ACA_0, \\
    &C \in D_0, \\
    &D_n \in D_{n+1} \text{ for } n \in \omega, \\
    &\beta_i(D_n) \text{ for } n \in \omega,  \\
    &\forall Y \in D_n \lnot \varphi(C,Y)  \text{ for } n \in \omega.
  \end{align*}
  Then, each finite subset of $T$ is consistent.
  Thus, from the compactness theorem, there exists a model $\M = (\N^{\M},S^{\M},C,\{D_n\}_{n \in \omega})$ of $T$. Define $S' \subseteq S^{\M}$ by
  $S' = \bigcup_{n \in \omega }\{X \in S^{\M} : \M \models X \in D_n \}$.

  It is easy to see that $\M' = (\N^{\M},S')$ is a model of $\RCAo$.
  Moreover, for any $X \in S'$, there exists $Y \in S'$ such that
  $\M \models X \in Y \land \beta_i(Y)$.
  Thus, by Lemma \ref{lem : closed under beta_i},
  $\M'$ is a model of $  \Sigma^1_i\hyp\SDC_0$ and a $\beta_i$ submodel of $\M$.
  Thus $\M' \models \forall Y \lnot \varphi(C,Y)$ because $\M \models \lnot \varphi(C,Y)$ for any $Y \in \M'$.
  However, this contradicts the assumption $  \Sigma^1_i\hyp\SDC_0 \vdash \forall X \exists Y \varphi(X,Y)$.
\end{proof}

\begin{theorem}\label{thm main result of sec 2}
  Let $e,i \in \omega$ such that $i > e$.
  Then, the theories $\ACA_0 + \{\beta^i_e\RFN(n) : n \in \omega\}$ and
  $\Th_{\Pi^1_{e+2}}( \Sigma^1_{i}\hyp\SDC_0)$ prove
  the same sentences.
\end{theorem}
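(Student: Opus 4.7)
I would establish the two inclusions separately.

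\textbf{Forward direction.} To show $\Sigma^1_i\hyp\SDC_0 \vdash \beta^i_e\RFN(n)$ for each fixed $n$: given $X$, I would iterate the $\beta_i$-model reflection of Theorem~\ref{thm equiv of SDC and beta}(2) a total of $n+1$ times, producing a chain $\M_0 \in \M_1 \in \cdots \in \M_n$ with $X \in \M_0$ and $\beta_i(\M_m)$ externally for each $m \le n$. The clauses $\M_n \models \beta_i(\M_m)$ for $m < n$ then follow from the external truth of $\beta_i(\M_m)$ (a $\Pi^1_i$ statement with $\M_m$ as a parameter) via the $\beta_i$-absoluteness of $\M_n$. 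Finally $\beta_e(\M_n)$ drops out of $\beta_i(\M_n)$ since $e < i$, and each $\M_m$ satisfies $\ACAo$. Because $\beta^i_e\RFN(n)$ is $\Pi^1_{e+2}$, this gives $\{\beta^i_e\RFN(n) : n \in \omega\} \subseteq \Th_{\Pi^1_{e+2}}(\Sigma^1_i\hyp\SDC_0)$.

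\textbf{Reverse direction, setup.} Let $\sigma = \forall X \exists Y\, \theta(X, Y) \in \Pi^1_{e+2}$ with $\theta \in \Pi^1_e$ be provable from $\Sigma^1_i\hyp\SDC_0$. Since $e < i$, the formula $\theta$ is also $\Pi^1_i$, so Lemma~\ref{lemma, number of beta} applied to $\theta$ yields $n \in \omega$ with
\begin{equation*}
\ACAo \vdash \forall X, \M_0, \ldots, \M_n\, \bigl((X \in \M_0 \land \beta^i_i(\M_0, \ldots, \M_n)) \to \exists Y \in \M_n\, \theta(X, Y)\bigr).
\end{equation*}
My plan is to invoke $\beta^i_e\RFN(n+1)$: for a given $X$, this produces $\M_0, \ldots, \M_{n+1}$ with $X \in \M_0$, $\beta^i(\M_0, \ldots, \M_{n+1})$, and $\beta_e(\M_{n+1})$. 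I will extract the required witness $Y$ by instantiating the lemma's consequence \emph{inside} $\M_{n+1}$ on the truncated chain $\M_0, \ldots, \M_n$.

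\textbf{The main obstacle.} The crux is verifying $\M_{n+1} \models \beta^i_i(\M_0, \ldots, \M_n)$. The clauses $\M_m \in \M_{m+1}$ are arithmetic and transfer trivially; the top-level clause $\beta_i(\M_n)$ inside $\M_{n+1}$ is directly part of $\beta^i(\M_0, \ldots, \M_{n+1})$. The delicate clauses are $\M_n \models \beta_i(\M_m)$ for $m < n$. For these I would reason inside $\M_{n+1}$: there, $\M_n$ is a $\beta_i$-model, and $\M_m$ is arithmetically recoverable from $\M_n$ via the chain $\M_m \in \M_{m+1} \in \cdots \in \M_n$, so the $\Pi^1_i$-absoluteness of $\M_n$ transfers $\beta_i(\M_m)$ (already true in $\M_{n+1}$) into its $\M_n$-relativization. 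This is the one step that is not purely mechanical and is where I expect most care to be needed.

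\textbf{Finishing.} Once $\M_{n+1} \models \beta^i_i(\M_0, \ldots, \M_n) \land X \in \M_0$ is in hand, the $\ACAo$-provable consequence of Lemma~\ref{lemma, number of beta} produces some $Y \in \M_n$ with $\M_{n+1} \models \theta(X, Y)$. Since $\theta \in \Pi^1_e$ and $\M_{n+1}$ is externally a $\beta_e$-model, $\Pi^1_e$-absoluteness upgrades this to $\theta(X, Y)$ externally, yielding $\sigma$ and completing the inclusion $\Th_{\Pi^1_{e+2}}(\Sigma^1_i\hyp\SDC_0) \subseteq \ACA_0 + \{\beta^i_e\RFN(n) : n \in \omega\}$.
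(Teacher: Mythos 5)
Your proposal is correct and follows essentially the same route as the paper: the forward inclusion by iterating the $\beta_i$-model reflection of Theorem~\ref{thm equiv of SDC and beta}(2), and the reverse inclusion by applying Lemma~\ref{lemma, number of beta} to get a chain length $n$, invoking $\beta^i_e\RFN(n+1)$, checking $\M_{n+1} \models \beta^i_i(\M_0,\ldots,\M_n)$, and pulling the witness out of $\M_{n+1}$ by $\Pi^1_e$-absoluteness. The ``delicate'' step you flag is exactly the content of the paper's remark following the definition of $\beta^i$, and your absoluteness argument for it is the intended one.
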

\begin{proof}
  Since $\ACA_0$ is $\Pi^1_2$ axiomatizable, $\ACAo$ is in $\Th_{\Pi^1_{i+2}}( \Sigma^1_{i}\hyp\SDC_0)$.
  It is easy to see that $\beta^i_e\RFN(n)$ is in $\Th_{\Pi^1_{i+2}}( \Sigma^1_{i}\hyp\SDC_0)$ for each $n \in \omega$.
  Therefore, any sentence provable from $\ACA_0 + \{\beta^i_e\RFN(n) : n \in \omega\}$ is also provable from
   $\Th_{\Pi^1_{i+2}}( \Sigma^1_{i}\hyp\SDC_0)$.

   For the converse, let $\sigma$ be a $\Pi^1_{e+2}$ sentence provable from $ \Sigma^1_{i}\hyp\SDC_0$.
   Write $\sigma \equiv \forall X \exists Y \varphi(X,Y)$ by a $\Pi^1_e$ formula $\varphi$.
   Then, there exists $k \in \omega$ such that $\ACAo$ proves
   \begin{align*}
    \forall X,\M_0,\ldots,\M_n((X \in \M_0 \land \beta^i_i(\M_0,\ldots,\M_n)) \to \exists Y \in \M_n \varphi(X,Y)).
   \end{align*}
   We show that $\forall X \exists Y \varphi(X,Y)$ is provable from $\ACA_0 + \{\beta^i_e\RFN(n) : n \in \omega\}$.
   We reason in $\ACA_0 + \{\beta^i_e\RFN(n) : n \in \omega\}$.
   Take an arbitrary $X$. By $\beta^i_e\RFN(m+1)$, take $\M_0,\ldots,\M_{k+1}$ such that
   \begin{align*}
     X \in \M_0 \land \beta^i_e(\M_0,\ldots,\M_{k+1}) \land \bigwedge_{m < k+2} \M_m \models \ACAo.
   \end{align*}
   Then, $\M_{k+1}$ satisfies $\ACAo \land X \in \M_0 \land \beta^i_i(\M_0,\ldots,\M_k)$.
   Thus, $\M_{k+1} \models \exists Y \in \M_k \varphi(X,Y)$ and hence
   $\M_{k+1} \models \exists Y \varphi(X,Y)$.
   Since $\M_{k+1}$ is a $\beta_e$ model and $\varphi$ is a $\Pi^1_e$ formula,
   $\exists Y \varphi(X,Y)$ holds.
\end{proof}

\begin{corollary}
  $\ACA_0 + \{\beta^i_e\RFN(n) : n \in \omega\}$ is  the same as
  $\Th_{\Pi^1_{e+2}}(\Pi^1_i\hyp\CAo)$ for $e=0,1,2$ and $i > e$.
\end{corollary}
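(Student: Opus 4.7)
The plan is to deduce this corollary directly from Theorem \ref{thm main result of sec 2} by invoking the $\Pi^1_4$-conservativity of $\Sigma^1_i\hyp\SDC_0$ over $\Pi^1_i\hyp\CAo$ that was mentioned in the introduction.

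First, I would note that for each of $e=0,1,2$, one has $e+2 \leq 4$, so every $\Pi^1_{e+2}$ sentence falls within the conservation range. Since $\Pi^1_i\hyp\CAo$ is a subsystem of $\Sigma^1_i\hyp\SDC_0$, the inclusion
\begin{align*}
 \Th_{\Pi^1_{e+2}}(\Pi^1_i\hyp\CAo) \subseteq \Th_{\Pi^1_{e+2}}(\Sigma^1_i\hyp\SDC_0)
\end{align*}
is immediate. The reverse inclusion is exactly the content of $\Pi^1_4$-conservativity: any $\Pi^1_{e+2}$ sentence provable from $\Sigma^1_i\hyp\SDC_0$ is, being in particular $\Pi^1_4$, already provable from $\Pi^1_i\hyp\CAo$. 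Hence
\begin{align*}
 \Th_{\Pi^1_{e+2}}(\Pi^1_i\hyp\CAo) = \Th_{\Pi^1_{e+2}}(\Sigma^1_i\hyp\SDC_0)
\end{align*}
for the relevant values of $e$.

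Second, I would apply Theorem \ref{thm main result of sec 2}, which, under the hypothesis $i > e$, identifies $\ACA_0 + \{\beta^i_e\RFN(n) : n \in \omega\}$ with $\Th_{\Pi^1_{e+2}}(\Sigma^1_i\hyp\SDC_0)$ in terms of provable sentences. Chaining this equality with the one from the previous paragraph yields the corollary.

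The only genuine content of the proof lies outside the excerpt: one needs the $\Pi^1_4$-conservation fact for $\Sigma^1_i\hyp\SDC_0$ over $\Pi^1_i\hyp\CAo$. This is stated without proof in the introduction, so in a fully self-contained write-up the main obstacle would be to supply (or cite) this conservation result; granted it, the corollary is a one-line combination of Theorem \ref{thm main result of sec 2} and conservation.
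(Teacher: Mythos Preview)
Your proposal is correct and follows exactly the paper's approach: the paper's proof is the single sentence ``This is immediate from the fact that $\Sigma^1_i\hyp\SDC_0$ is $\Pi^1_4$-conservative over $\Pi^1_i\hyp\CAo$ \cite[VII.6.20]{Simpson},'' and you have simply unpacked this by observing $e+2\le 4$ and combining the conservation result with Theorem~\ref{thm main result of sec 2}.
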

\begin{proof}
  This is immediate from the fact that $ \Sigma^1_i\hyp\SDC_0$ is $\Pi^1_4$-conservative over $\Pi^1_i\hyp\CAo$ \cite[VII.6.20]{Simpson}.
\end{proof}

\section{Boolean combinations of $\Pi^1_{i+1}$ formulas provable from $  \Sigma^1_i\hyp\SDC$}
In this section, we give a characterization of the set $\{\sigma \in \B(\Pi^1_{i+1}) :   \Sigma^1_i\hyp\SDC_0 \vdash \sigma\}$
for $i > 0$.
Here, $\B(\Gamma)$ is the set of boolean combinations of formulas in $\Gamma$.
In other words,
$\B(\Gamma)$ denotes the set generated by $\land, \lor, \lnot$ starting from $\Gamma$.

\begin{definition}
  For a sentence $\sigma$, we define $\beta_i\Rfn(\sigma)$ by
  \begin{align*}
    \sigma \to \exists \M (\beta_i(\M) \land \M \models \sigma).
  \end{align*}
  For a class $\Gamma$ of formulas, we define an axiomatic system
  $\beta_i\Rfn(\Gamma)_0$ by $\ACAo + \{\beta_i\Rfn(\sigma) : \sigma \in \Gamma, \sigma \text{ is a sentence}\}$.
\end{definition}
\begin{remark}
  Notice that $\beta_i\Rfn(\Sigma^1_{i+1})_0$
  is a weaker variant of (3) in Theorem \ref{thm equiv of SDC and beta}.
  The difference between those is whether $\sigma$ allows (set) parameters.
  Since each instance of $\beta_i\Rfn(\Sigma^1_{i+1})_0$ is $\Sigma^1_{i+2}$, $\beta_i\Rfn(\Sigma^1_{i+1})_0$ is strictly weaker than $\Sigma^1_{i}\hyp\SDC_0$.
\end{remark}

We show that the theories 
$\beta_i\Rfn(\Sigma^1_{i+1})_0$ and 
$\Th_{\B(\Pi^1_{i+1})}(\Sigma^1_{i}\hyp\SDC_0)$ are equivalent.
We start with showing a basic property of $\B(\Pi^1_{i})$.

\begin{lemma}\label{CNF}
  Let $\rho \in \B(\Pi^1_i)$. Then, there exist $\pi_0,\ldots,\pi_{n-1} \in \Pi^1_i$ and $\sigma_0,\ldots,\sigma_{n-1} \in \Sigma^1_i$ such that
  $\rho$ is equivalent to $\bigwedge_{n' < n}(\pi_{n'} \lor \sigma_{n'})$ over $\RCAo$.
\end{lemma}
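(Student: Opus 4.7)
The plan is to prove the statement by induction on the Boolean construction of $\rho$ from its $\Pi^1_i$ atoms. Let $C$ denote the class of all formulas $\RCAo$-equivalent to some $\bigwedge_{n' < n}(\pi_{n'} \lor \sigma_{n'})$ with $\pi_{n'} \in \Pi^1_i$ and $\sigma_{n'} \in \Sigma^1_i$. It suffices to show $\Pi^1_i \subseteq C$ and that $C$ is closed under $\land$, $\lor$, and $\lnot$ over $\RCAo$; structural induction on $\B(\Pi^1_i)$ then yields $\B(\Pi^1_i) \subseteq C$. The background facts I will invoke are the standard closure of $\Pi^1_i$ under $\land$ and $\lor$ (by pulling universal quantifiers outside), the dual closure of $\Sigma^1_i$ under $\land$ and $\lor$, and the De Morgan duality between the two classes.

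The base case and the $\land$-case are immediate: any $\pi \in \Pi^1_i$ is equivalent to $\pi \lor (0=1)$, and two conjunctions of the required form concatenate to one.

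For closure under $\lor$, given $\rho_1 \equiv \bigwedge_{a < n_1}(\pi^1_a \lor \sigma^1_a)$ and $\rho_2 \equiv \bigwedge_{b < n_2}(\pi^2_b \lor \sigma^2_b)$, distribution yields
\[\rho_1 \lor \rho_2 \equiv \bigwedge_{a < n_1,\, b < n_2}\bigl((\pi^1_a \lor \pi^2_b) \lor (\sigma^1_a \lor \sigma^2_b)\bigr),\]
after which $\pi^1_a \lor \pi^2_b$ contracts to a single $\Pi^1_i$ formula and $\sigma^1_a \lor \sigma^2_b$ to a single $\Sigma^1_i$ formula. For closure under $\lnot$, De Morgan applied to $\rho = \bigwedge_{n' < n}(\pi_{n'} \lor \sigma_{n'})$ gives
\[\lnot \rho \equiv \bigvee_{n' < n}(\tilde{\sigma}_{n'} \land \tilde{\pi}_{n'}),\]
where $\tilde{\sigma}_{n'} = \lnot \pi_{n'} \in \Sigma^1_i$ and $\tilde{\pi}_{n'} = \lnot \sigma_{n'} \in \Pi^1_i$. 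Each conjunct $\tilde{\sigma}_{n'} \land \tilde{\pi}_{n'}$ is itself a $2$-clause CNF $(\tilde{\pi}_{n'} \lor (0=1)) \land ((0=1) \lor \tilde{\sigma}_{n'})$ in $C$, so iterating the previously established $\lor$-closure of $C$ places $\lnot \rho$ in $C$ as well.

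I do not foresee any real obstacle: the proof reduces to propositional CNF bookkeeping combined with the standard closure properties of the analytical hierarchy. The only mildly subtle point is the DNF-to-CNF conversion in the $\lnot$-case, which I prefer to handle by appealing to the already established $\lor$-closure of $C$ rather than by a direct combinatorial rewriting indexed over functions $f \colon n \to 2$.
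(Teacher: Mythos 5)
Your proof is correct and is essentially the same argument as the paper's: the paper simply takes a propositional conjunctive normal form of $\rho$ (treating $\Pi^1_i$ and $\Sigma^1_i$ formulas as literals) and then contracts each clause using the closure of $\Pi^1_i$ and $\Sigma^1_i$ under $\lor$ over $\RCAo$, whereas you reprove the CNF conversion by structural induction on the Boolean structure, contracting as you go. There is no gap; the negation case you flag as the only subtle point is handled by exactly the same De Morgan duality and $\lor$-closure that the paper's one-step CNF conversion implicitly relies on.
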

\begin{proof}
  We reason in $\RCAo$. Then, $\Pi^1_i$ and $\Sigma^1_i$ are closed under $\lor$.

  Let $\rho$ be a $\B(\Pi^1_i)$ sentence.
  Taking a conjunction normal form,
  \begin{align*}
    \rho \leftrightarrow \bigwedge_{n' < n} \bigvee_{m' < m} (\pi_{n',m'} \lor \sigma_{n',m'})
  \end{align*}
  where each $\pi_{n',m'}$ is $\Pi^1_i$ and each $\sigma_{n',m'}$ is $\Sigma^1_i$.
  Put $\pi_{n'} \equiv \displaystyle{\bigvee_{m'}} \pi_{n',m'}$ and $\sigma_{n'} \equiv \displaystyle{\bigvee_{m'}} \sigma_{n',m'}$.
  Then, $\pi_{n'} \in \Pi^1_i$, $\sigma_{n'} \in \Sigma^1_i$ and
  \begin{align*}
    \rho \leftrightarrow \bigwedge_{n' < n} (\pi_{n'} \lor \sigma_{n'}).
  \end{align*}
    This completes the proof.
\end{proof}

\begin{definition}
  We define $\beta^i_e\RFN^-(\varphi;n)$ as 
  the $\Sigma^1_{i+1}$ sentence stating that there exist $Y_0,\ldots Y_n$ such that
$Y_0 \models \varphi$, and $\beta^i_e(Y_0,\ldots,Y_n)$. We omit $\varphi$ if it is trivial.
\end{definition}

\begin{lemma}\label{lem iteration of beta-model}
  Let $i \in \omega$ such that $i > 0$.
  Then, for any $n \in \omega$,
  $\beta_i\Rfn(\Sigma^1_{i+1})_0$ proves $\varphi \to \beta^i_i\RFN^-(\varphi;n)$ for
  any sentence $\varphi \in \Sigma^1_{i+1}$.
\end{lemma}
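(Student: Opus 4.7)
The plan is to proceed by external induction on $n \in \omega$, establishing $\beta_i\Rfn(\Sigma^1_{i+1})_0 \vdash \varphi \to \beta^i_i\RFN^-(\varphi;n)$ uniformly in $\varphi \in \Sigma^1_{i+1}$. For the base case $n=0$, the conjunctions over $m < 0$ inside $\beta^i(Y_0)$ are vacuous, so $\beta^i_i\RFN^-(\varphi;0)$ just asserts the existence of a $\beta_i$-model $Y_0$ with $Y_0 \models \varphi$; the required implication is then the axiom $\beta_i\Rfn(\varphi)$.

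For the inductive step, I would first observe that $\beta^i_i\RFN^-(\varphi;n)$ is itself $\Sigma^1_{i+1}$: its only non-arithmetic constituents are the existential block $\exists Y_0,\ldots,Y_n$ and the outer $\Pi^1_i$ clause $\beta_i(Y_n)$, while the chain condition $Y_m \in Y_{m+1}$, the internal satisfaction clauses $Y_n \models \beta_i(Y_m)$, and $Y_0 \models \varphi$ are all arithmetic in the relevant codes. Consequently $\beta_i\Rfn(\Sigma^1_{i+1})_0$ contains the instance
\[
\beta^i_i\RFN^-(\varphi;n) \to \exists Y_{n+1}\bigl(\beta_i(Y_{n+1}) \land Y_{n+1} \models \beta^i_i\RFN^-(\varphi;n)\bigr).
\]
Assuming $\varphi$ and invoking the induction hypothesis, this produces a $\beta_i$-model $Y_{n+1}$ together with internal witnesses $Y_0,\ldots,Y_n \in Y_{n+1}$ satisfying the body of $\beta^i_i\RFN^-(\varphi;n)$ relative to $Y_{n+1}$.

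The remaining task is to check that in the ambient universe $(Y_0,\ldots,Y_n,Y_{n+1})$ witnesses $\beta^i_i\RFN^-(\varphi;n+1)$. The chain $Y_0 \in \cdots \in Y_n$, the satisfactions $Y_n \models \beta_i(Y_m)$ for $m<n$, and $Y_0 \models \varphi$ are arithmetic in the codes, hence absolute between $Y_{n+1}$ and the universe; the clause $\beta_i(Y_n)$ transfers upwards because it is $\Pi^1_i$ and $Y_{n+1}$ is a genuine $\beta_i$-model. The transfer from $Y_n \models \beta_i(Y_m)$ to $\beta_i(Y_m)$ in the universe then runs on the $\Pi^1_i$-ness of $\beta_i$, and a final application of $\beta_i(Y_{n+1})$ yields $Y_{n+1} \models \beta_i(Y_m)$ for each $m \leq n$, completing $\beta^i_i(Y_0,\ldots,Y_n,Y_{n+1})$. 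I expect this absoluteness bookkeeping — translating $Y_{n+1}$'s internal view of its chain into external $\beta_i$-model facts — to be the main technical point, but it runs on exactly the same $\Pi^1_i$-transfer mechanism already used in Lemmas \ref{lem : closed under beta_i} and \ref{lemma, number of beta}.
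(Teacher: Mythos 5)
Your proof is correct and follows essentially the same route as the paper's: induction on $n$, with the base case being a single reflection axiom, and the inductive step applying the reflection axiom to the $\Sigma^1_{i+1}$ sentence $\beta^i_i\RFN^-(\varphi;n)$ and then pulling the internal witnesses out of the resulting $\beta_i$-model by $\Pi^1_i$-absoluteness. The only cosmetic difference is that the paper transfers the entire $\Pi^1_i$ matrix out of $Y_{n+1}$ in one step, whereas you itemize the absoluteness of each clause; the mechanism is identical.
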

\begin{proof}
  We prove the lemma by induction.
  In the case $n = 0$, $\varphi \to \beta^i_i\RFN^-(\varphi;n)$ is the same as $\varphi \to \exists Y(\beta_i(Y) \land Y \models \varphi)$.
  Therefore, it is an instance of $\beta_i\Rfn(\Sigma^1_{i+1})_0$ and hence provable from $\beta_i\Rfn(\Sigma^1_{i+1})_0$.

  For the induction step, take an arbitrary $n$ and assume that
  $\beta_i\Rfn(\Sigma^1_{i+1})_0$ proves $\varphi \to \beta^i_i\RFN^-(\varphi;n)$ for any sentence $\varphi \in \Sigma^1_{i+1}$.
  Take an arbitrary sentence $\varphi \in \Sigma^1_{i+1}$.
  We reason in $\beta_i\Rfn(\Sigma^1_{i+1})_0$ and show that $\varphi \to \beta^i_i\RFN^-(\varphi;n+1)$ holds.

  Assume  that $\varphi$ holds.
  Then, we have $\beta^i_i\RFN^-(\varphi;n)$ because of the induction hypothesis.
  Since $\beta^i_i\RFN^-(\varphi;n)$ is a $\Sigma^1_{i+1}$ sentence,
  \begin{align*}
    \beta^i_i\RFN^-(\varphi;n) \to \exists Y(\beta_i(Y) \land Y \models \beta^i_i\RFN^-(\varphi;n))
  \end{align*}
  also holds by the induction hypothesis. Thus we have $\exists Y(\beta_i(Y) \land Y \models \beta^i_i\RFN^-(\varphi;n))$.
  Take a $\beta_i$-model $Y$ such that $Y \models \beta^i_i\RFN^-(\varphi;n)$.
  Then, there exist $Y_0,\ldots,Y_n$ such that
  \begin{align*}
    Y \models \left[(Y_0 \models \varphi) \land \bigwedge_{m < n+1} \beta_i(Y_m) \land \bigwedge_{m < n} Y_m \in Y_{m+1} \right].
  \end{align*}
  Since the condition in the square bracket is $\Pi^1_i$,
  \begin{align*}
    (Y_0 \models \varphi) \land \bigwedge_{m < n+1} \beta_i(Y_m) \land \bigwedge_{m < n} Y_m \in Y_{m+1}
  \end{align*}
  actually holds. Take $Y_{n+1} = Y$, then $Y_0,\ldots,Y_n,Y_{n+1}$ witness
  $\beta^i_i\RFN^-(\varphi;n+1)$.
\end{proof}

\begin{lemma}\label{Lem_cons_or}
  Let $i\in \omega, i > 0$. Let
 $\pi$ be a $\Pi^1_{i+1}$ sentence and $\sigma$ a $\Sigma^1_{i+1}$ sentence such that
  $\Sigma^1_{i}\hyp\SDC_0 \vdash \pi \lor \sigma$.
  Then, $\beta_i\Rfn(\Sigma^1_{i+1})_0 \vdash
  \pi \lor \sigma$.
\end{lemma}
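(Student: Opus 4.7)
The strategy is a contradiction argument using the Barwise--Schlipf-style compactness construction from Lemma \ref{lemma, number of beta}. Assume, toward contradiction, that $\beta_i\Rfn(\Sigma^1_{i+1})_0 \not\vdash \pi \lor \sigma$, so that the theory $T_0 := \beta_i\Rfn(\Sigma^1_{i+1})_0 + \neg\pi + \neg\sigma$ is consistent. I plan to build a model of $\Sigma^1_i\hyp\SDC_0 + \neg\pi + \neg\sigma$, which will contradict the hypothesis $\Sigma^1_i\hyp\SDC_0 \vdash \pi\lor\sigma$.

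First, extend $\L_2$ by fresh set constants $D_0, D_1, \ldots$, and let $T$ be $T_0$ together with the schemes ``$D_n \in D_{n+1}$'' and ``$\beta_i(D_n)$'' for each $n\in\omega$, plus the (arithmetical-in-$D_0$) axiom ``$D_0 \models \neg\pi$''. A finite fragment involves only $D_0,\ldots,D_k$, and its consistency follows by applying Lemma \ref{lem iteration of beta-model} inside $T_0$ to the $\Sigma^1_{i+1}$ sentence $\neg\pi$: this yields $Y_0,\ldots,Y_k$ with $Y_0\models\neg\pi$ and $\beta^i_i(Y_0,\ldots,Y_k)$, and one interprets $D_j:=Y_j$. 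Externally, $\beta_i(Y_j)$ holds for every $j\leq k$, because $\beta_i(Y_k)$ is built into $\beta^i_i$, while the internal statements $Y_k\models\beta_i(Y_j)$ (for $j<k$) transfer outward via the $\beta_i$-ness of $Y_k$ together with the fact that $\beta_i$ is $\Pi^1_i$.

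By compactness, fix $\M=(\N^\M, S^\M, \{D_n\}_n)\models T$. As in Lemma \ref{lemma, number of beta}, set $S' := \bigcup_n\{X\in S^\M : \M\models X\in D_n\}$ and $\M' := (\N^\M, S')$. Lemma \ref{lem : closed under beta_i} gives that $\M'\models\Sigma^1_i\hyp\SDC_0$ and that $\M'$ is a $\beta_i$-submodel of $\M$. It remains to show $\M'\models\neg\pi\land\neg\sigma$. For $\neg\sigma$, write $\sigma \equiv \exists X\,\phi(X)$ with $\phi\in\Pi^1_i$; since $\M\models\forall X\,\neg\phi(X)$ and $\M'$ is a $\beta_i$-submodel of $\M$, we have $\M'\models\neg\phi(X)$ for every $X\in S'$. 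For $\neg\pi$, write $\pi\equiv\forall X\,\psi(X)$ with $\psi\in\Sigma^1_i$; the statement ``$D_0\models\neg\pi$'' is arithmetical in $D_0$ and so descends from $\M$ to the $\omega$-submodel $\M'$, producing some $X\in D_0$ with $D_0\models\neg\psi(X)$ in $\M'$. Since $\M'\models\beta_i(D_0)$ (again by Lemma \ref{lem : closed under beta_i}) and $\neg\psi\in\Pi^1_i$, this $\Pi^1_i$ instance lifts out of $D_0$ to give $\M'\models\neg\psi(X)$, hence $\M'\models\neg\pi$, which is the desired contradiction.

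The anticipated main difficulty is the three-layer bookkeeping: $D_0$ lives inside $\M'$, which in turn sits inside $\M$, and the proof must carefully shuttle information across these layers using the distinct tools available at each interface (absoluteness of arithmetic-in-$D_0$ content between $\M$ and $\M'$; $\beta_i$-absoluteness between $\M'$ and $\M$; $\beta_i$-absoluteness between $D_0$ and $\M'$). The crucial move is the treatment of $\neg\pi$: the $\Pi^1_{i+1}$ complexity of $\pi$ is precisely what lets a witness of $\neg\pi$ found inside $D_0$ be repackaged as a $\Pi^1_i$ instance that then transfers upward through the $\beta_i$-structure, and this is where the complexity hypothesis on $\pi$ is fully used.
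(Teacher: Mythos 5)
Your proposal is correct and follows essentially the same route as the paper's proof: the same compactness argument with constants $D_n$ forced to satisfy $\beta_i(D_n)$, $D_n\in D_{n+1}$ and $D_0\models\lnot\pi$, finite satisfiability via Lemma \ref{lem iteration of beta-model} applied to $\lnot\pi$, and the cut-down model $\M'$ obtained from Lemma \ref{lem : closed under beta_i} satisfying $\Sigma^1_i\hyp\SDC_0+\lnot\pi+\lnot\sigma$. The only cosmetic difference is that you assert $\beta_i(D_n)$ directly where the paper writes $\beta^i_i(C_0,\ldots,C_n)$; these amount to the same thing, and your transfer of $\lnot\sigma$ and $\lnot\pi$ into $\M'$ matches the paper's argument.
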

\begin{proof}
  For the sake of contradiction, assume that $\Sigma^1_i\hyp\SDC_0$ proves $\pi \lor \sigma$ but $\beta_i\Rfn(\Sigma^1_{i+1})_0$ does not prove it.
  Then, the theory $T = \beta_i\Rfn(\Sigma^1_{i+1})_0 \cup \{\lnot \pi,  \lnot \sigma\}$
  is consistent.

  Let $\M = (\N^{\M},S^{\M})$ be a model of $T$.
  We take new set constant symbols $C_n$ for each $n \in \omega$.
  Define a theory $T'$ by
  \begin{align*}
    T' = \beta_i\Rfn(\Sigma^1_{i+1})_0 &+ \lnot \sigma + \{\beta^i_i(C_0,\ldots,C_n) : n \in \omega\} +  C_0 \models \lnot \pi .
  \end{align*}
  Then, $\M$ satisfies every finite subtheory of $T'$ because $\M$ is a model of $\beta_i\Rfn(\Sigma^1_{i+1})_0$, $\lnot \sigma$ and $\beta^i_i\RFN^-(\lnot \pi;n)$ for all $n$.
  Therefore, $T'$ is a consistent theory.

  Let $\M' = (\N^{\M'},S,\{Y_n : n \in \omega\})$ be a model of $T'$.
  Define $S' = \bigcup_{n \in \omega}\{X \in S : \M' \models X \in Y_n\}$.
  Put $\M'' = (\N^{\M'},S')$. Then, by Lemma \ref{lem : closed under beta_i},
  \begin{itemize}
    \item $\M''$ is a model of $\Sigma^1_i\hyp\SDC_0$,
    \item $\M''$ is a $\beta_i$-submodel of $\M'$ and
    \item for any $n \in \omega $, $\M'' \models \beta_i(Y_n)$.
  \end{itemize}
  We note that $\M''$ satisfies $\lnot \sigma$ because $\M'$ satisfies $\lnot \sigma$ and $\M''$ is a $\beta_i$-submodel of $\M'$.
  We claim that $\M''$ also satisfies $\lnot \pi$.
  Indeed, $\M'' \models [Y_0 \models \lnot \pi]$ because $\M' \models [Y_0 \models \lnot \pi]$ and the condition $Y_0 \models \lnot \pi$ is arithmetical. Since $\M'' \models \beta_i(Y_0)$, $\M'' \models \lnot \pi$.

  Consequently, $\M''$ is a model of $\Sigma^1_i\hyp\SDC_0 \cup \{ \lnot \pi, \lnot \sigma\}$. However, this contradicts the assumption $\Sigma^1_i\hyp\SDC_0$ proves $\pi \lor \sigma$.
\end{proof}

\begin{theorem}
  The theories $\beta_i\Rfn(\Sigma^1_{i+1})_0$ and $\Th_{\B(\Pi^1_{i+1})}(\Sigma^1_i\hyp\SDC_0)$ prove the same sentences.
\end{theorem}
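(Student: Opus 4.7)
The plan is to establish both directions of provability separately; each reduces to combining results already assembled in this section.

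For the direction that every axiom of $\beta_i\Rfn(\Sigma^1_{i+1})_0$ lies in $\Th_{\B(\Pi^1_{i+1})}(\Sigma^1_i\hyp\SDC_0)$, I would first note that $\ACAo$ is $\Pi^1_2$-axiomatizable and hence contained in $\Th_{\B(\Pi^1_{i+1})}(\Sigma^1_i\hyp\SDC_0)$ when $i \geq 1$. For each $\Sigma^1_{i+1}$-sentence $\sigma$, the instance $\beta_i\Rfn(\sigma)$ is logically equivalent to $\lnot \sigma \lor \exists \M(\beta_i(\M) \land \M \models \sigma)$: the first disjunct is $\Pi^1_{i+1}$ and the second is $\Sigma^1_{i+1}$, since $\beta_i(\M)$ is $\Pi^1_i$ and $\M \models \sigma$ has the same quantifier complexity as $\sigma$. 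Hence $\beta_i\Rfn(\sigma) \in \B(\Pi^1_{i+1})$, and Theorem~\ref{thm equiv of SDC and beta}(3), applied to $\sigma$ regarded as a parameter-free $\Sigma^1_{i+2}$ sentence, gives $\Sigma^1_i\hyp\SDC_0 \vdash \beta_i\Rfn(\sigma)$.

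For the converse, let $\rho$ be any $\B(\Pi^1_{i+1})$-sentence provable from $\Sigma^1_i\hyp\SDC_0$. By Lemma~\ref{CNF} applied with $i+1$ in place of $i$, one can rewrite $\rho$ over $\RCAo$ as a conjunction $\bigwedge_{n'<n}(\pi_{n'} \lor \sigma_{n'})$ with $\pi_{n'} \in \Pi^1_{i+1}$ and $\sigma_{n'} \in \Sigma^1_{i+1}$. Since $\Sigma^1_i\hyp\SDC_0$ proves this conjunction, it proves each conjunct separately. Lemma~\ref{Lem_cons_or} then gives $\beta_i\Rfn(\Sigma^1_{i+1})_0 \vdash \pi_{n'} \lor \sigma_{n'}$ for every $n'$, and reconjoining yields $\beta_i\Rfn(\Sigma^1_{i+1})_0 \vdash \rho$.

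All the substantive work has already been absorbed into the preceding lemmas: the Barwise--Schlipf compactness argument in Lemma~\ref{Lem_cons_or} is the main technical step, while the present theorem is essentially the syntactic packaging of that lemma via the conjunctive normal form of Lemma~\ref{CNF}. The only point demanding care is the complexity calculation showing $\beta_i\Rfn(\sigma) \in \B(\Pi^1_{i+1})$ for $\sigma \in \Sigma^1_{i+1}$, which is what guarantees that the axioms of $\beta_i\Rfn(\Sigma^1_{i+1})_0$ are of the correct syntactic shape to count among the $\B(\Pi^1_{i+1})$-consequences of $\Sigma^1_i\hyp\SDC_0$.
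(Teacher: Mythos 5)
Your proposal is correct and follows essentially the same route as the paper: the forward direction by checking that each instance $\beta_i\Rfn(\sigma)$ is a $\B(\Pi^1_{i+1})$ sentence provable via Theorem~\ref{thm equiv of SDC and beta}(3) (a step the paper calls immediate and you spell out), and the converse via the conjunctive normal form of Lemma~\ref{CNF} together with Lemma~\ref{Lem_cons_or} applied conjunct by conjunct.
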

\begin{proof}
  It is enough to show that each axiom in $\beta_i\Rfn(\Sigma^1_{i+1})_0$ is provable from $\Th_{\B(\Pi^1_{i+1})}(\Sigma^1_i\hyp\SDC_0)$ and vice verse.
  It is immediate that each axiom in $\beta_i\Rfn(\Sigma^1_{i+1})_0$ is provable from $\Th_{\B(\Pi^1_{i+1})}(\Sigma^1_i\hyp\SDC_0)$. We show that
  each $\B(\Pi^1_{i+1})$ formula provable from $\Sigma^1_i\hyp\SDC_0$ is already provable from
  $\beta_i\Rfn(\Sigma^1_{i+1})_0$.

  Let $\rho \in \Th_{\B(\Pi^1_{i+1})}(\Sigma^1_i\hyp\SDC_0)$.
  We  show that $\rho$ is provable from $\beta_i\Rfn(\Sigma^1_{i+1})_0$.
  There exist $\pi_0,\ldots,\pi_{n-1} \in \Pi^1_{i+1}, \sigma_0,\ldots,\sigma_{n-1} \in \Sigma^1_{i+1}$ such that
  $\rho$ is equivalent to  $\bigwedge_{m < n} \pi_m \lor \sigma_m$ by Lemma \ref{CNF}.
  Since $\rho$ is provable from $\Sigma^1_i\hyp\SDC_0$, $\pi_m \lor \sigma_m$ is provable from $\Sigma^1_i\hyp\SDC_0$ for any $m < n$.
  Therefore, $\beta_i\Rfn(\Sigma^1_{i+1})_0$ proves $\pi_m \lor \sigma_m$ for any $m < n$ by Lemma
  \ref{Lem_cons_or}. Hence $\beta_i\Rfn(\Sigma^1_{i+1})_0$ proves $\bigwedge_{m < n} \pi_m \lor \sigma_m$.
\end{proof}

\begin{corollary}
  The theories $\beta_i\Rfn(\Sigma^1_{i+1})_0$ and $\beta_i\Rfn(\B(\Pi^1_{i+1}))_0$ prove the same sentences.
\end{corollary}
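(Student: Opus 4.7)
The plan is to reduce the corollary to the preceding theorem by showing that every reflection axiom of $\beta_i\Rfn(\B(\Pi^1_{i+1}))_0$ is itself a $\B(\Pi^1_{i+1})$-consequence of $\Sigma^1_i\hyp\SDC_0$, and therefore already provable in $\beta_i\Rfn(\Sigma^1_{i+1})_0$ by that theorem. The reverse inclusion $\beta_i\Rfn(\Sigma^1_{i+1})_0 \subseteq \beta_i\Rfn(\B(\Pi^1_{i+1}))_0$ is immediate, since every $\Sigma^1_{i+1}$ sentence is $\RCAo$-equivalent to a $\B(\Pi^1_{i+1})$ sentence (namely $\neg\neg\sigma$ with $\neg\sigma \in \Pi^1_{i+1}$), and this equivalence transfers to the corresponding reflection instances over $\ACAo$.

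For the nontrivial direction I would fix a sentence $\rho \in \B(\Pi^1_{i+1})$ and derive $\beta_i\Rfn(\rho)$ from $\beta_i\Rfn(\Sigma^1_{i+1})_0$ by combining two complexity observations. First, $\B(\Pi^1_{i+1}) \subseteq \Sigma^1_{i+2}$, because $\Sigma^1_{i+2}$ already contains $\Pi^1_{i+1}$ and $\Sigma^1_{i+1}$ and is closed under $\land$ and $\lor$ up to prenexing. Hence Theorem \ref{thm equiv of SDC and beta}(3), applied to $\rho$ regarded as a parameter-free $\Sigma^1_{i+2}$ sentence, yields $\Sigma^1_i\hyp\SDC_0 \vdash \beta_i\Rfn(\rho)$. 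Second, $\beta_i\Rfn(\rho)$ itself lies in $\B(\Pi^1_{i+1})$: its hypothesis $\rho$ is $\B(\Pi^1_{i+1})$ by choice, while its conclusion $\exists\M(\beta_i(\M) \land \M \models \rho)$ is $\Sigma^1_{i+1}$, because $\beta_i(\M)$ is $\Pi^1_i$ and $\M \models \rho$ reduces to an arithmetic predicate of $\M$ (satisfaction in a coded model turns every set quantifier into a first-order quantifier over the indexing of $\M$).

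Putting these together, $\beta_i\Rfn(\rho)$ is a $\B(\Pi^1_{i+1})$-consequence of $\Sigma^1_i\hyp\SDC_0$, and the previous theorem then produces a proof of it in $\beta_i\Rfn(\Sigma^1_{i+1})_0$, as required. I do not expect a substantive obstacle here: the model-theoretic content has been entirely absorbed into the preceding theorem, and what remains is the complexity bookkeeping that places every $\B(\Pi^1_{i+1})$-reflection instance within the scope of Theorem \ref{thm equiv of SDC and beta}(3) and within the syntactic class $\B(\Pi^1_{i+1})$.
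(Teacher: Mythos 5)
Your proof is correct and follows essentially the same route as the paper's: observe that each instance $\beta_i\Rfn(\rho)$ for $\rho \in \B(\Pi^1_{i+1})$ is itself a $\B(\Pi^1_{i+1})$ sentence provable from $\Sigma^1_i\hyp\SDC_0$ via Theorem \ref{thm equiv of SDC and beta}, and then invoke the preceding theorem; you merely spell out the complexity bookkeeping (and the easy reverse inclusion) that the paper leaves implicit.
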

\begin{proof}
  It is enough to show that each instance of $\beta_i\Rfn(\B(\Pi^1_{i+1}))$ is a $\B(\Pi^1_{i+1})$ sentence provable from
  $\Sigma^1_{i+1}\hyp\SDC_0$.
  This is immediate from Theorem \ref{thm equiv of SDC and beta}.
\end{proof}

\begin{corollary}
  For any $i > 0$, $\Th_{\B(\Pi^1_{i+1})}(\Sigma^1_i\hyp\SDC_0)$ is not finitely axiomatizable.
\end{corollary}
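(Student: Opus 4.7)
The plan is to derive a contradiction via G\"odel's second incompleteness theorem. Assume for contradiction that $T := \Th_{\B(\Pi^1_{i+1})}(\Sigma^1_i\hyp\SDC_0)$ is finitely axiomatizable. By the preceding theorem, $T$ is deductively equivalent to $\beta_i\Rfn(\Sigma^1_{i+1})_0$; invoking compactness in this axiomatization, finitely many instances $\beta_i\Rfn(\sigma_0),\ldots,\beta_i\Rfn(\sigma_k)$ together with $\ACAo$ already suffice, and we can set $\theta := \bigwedge_{j \le k} \beta_i\Rfn(\sigma_j) \in \B(\Pi^1_{i+1})$, so that $T$ is equivalent to $\ACAo + \theta$.

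The central step is to extend $\beta_i$-reflection from $\Sigma^1_{i+1}$-sentences to the $\B(\Pi^1_{i+1})$-sentence $\theta$: I would prove
\[ T \vdash \theta \to \exists \M (\beta_i(\M) \land \M \models \theta). \]
By Lemma \ref{CNF}, write $\theta$ as $\bigwedge_{m < n}(\pi_m \lor \sigma_m)$ with $\pi_m \in \Pi^1_{i+1}$ and $\sigma_m \in \Sigma^1_{i+1}$. Reasoning in $T$ and assuming $\theta$, one case-splits over the $2^n$ subsets $S \subseteq \{0,\ldots,n-1\}$: in the case $\bigwedge_{m \in S}\sigma_m \land \bigwedge_{m \notin S}\pi_m$, the conjunction $\bigwedge_{m \in S}\sigma_m$ is a single $\Sigma^1_{i+1}$-sentence, so the axiom $\beta_i\Rfn(\bigwedge_{m \in S}\sigma_m)$ of $T$ yields a $\beta_i$-model $\M$ with $\M \models \sigma_m$ for $m \in S$. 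By downward absoluteness of $\Pi^1_{i+1}$-sentences to $\beta_i$-models (immediate from the $\Sigma^1_i$-elementarity encoded by $\beta_i(\M)$), $\M \models \pi_m$ for $m \notin S$, and hence $\M \models \theta$ in every case.

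To conclude, since $T \vdash \theta$, we obtain $T \vdash \exists \M(\beta_i(\M) \land \M \models \theta)$. Because $i \geq 1$, every $\beta_i$-model is a model of $\ACAo$, so $T$ proves the existence of an $\omega$-model of $\ACAo + \theta$, which is precisely $T$ itself. The standard arithmetization showing that $\ACAo$ proves ``existence of an $\omega$-model of a finitely axiomatized theory implies its consistency'' then gives $T \vdash \Con(T)$. Since $T$ is consistent (being contained in $\Sigma^1_i\hyp\SDC_0$) and extends $\ACAo$, this contradicts G\"odel's second incompleteness theorem.

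The main obstacle is the $\B(\Pi^1_{i+1})$-reflection step above: one cannot internally form the set $S = \{m : \sigma_m\}$ at the required complexity, since membership in $S$ is $\Sigma^1_{i+1}$. The workaround is the externally bounded case-split over the $2^n$ possible profiles $S$, allowing a single $\Sigma^1_{i+1}$-instance of $\beta_i\Rfn$ to produce the witnessing $\beta_i$-model inside each case. Beyond this, the argument uses only standard ingredients, namely downward $\Pi^1_{i+1}$-absoluteness to $\beta_i$-models and the $\ACAo$-provable soundness of $\omega$-model existence.
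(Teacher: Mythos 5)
Your proof is correct, and the endgame (extract a single $\B(\Pi^1_{i+1})$ axiom $\theta$ with $T=\ACAo+\theta$, show $T$ proves the existence of an $\omega$-model of itself, contradict G\"odel's second incompleteness theorem) is the same as the paper's. The difference lies in how the reflection instance for $\theta$ is obtained. The paper works with the axiomatization $\beta_i\Rfn(\B(\Pi^1_{i+1}))_0$, which by the immediately preceding corollary proves the same sentences as $\beta_i\Rfn(\Sigma^1_{i+1})_0$; there the reflection instance $\beta_i\Rfn(\rho)$ for the conjunction $\rho$ of the finitely many chosen axioms is itself an axiom of the theory, so $\ACAo+\rho\vdash\exists\M(\beta_i(\M)\land\M\models\rho)$ comes for free. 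You instead stay with the $\Sigma^1_{i+1}$-reflection axioms and prove by hand that reflection lifts to the $\B(\Pi^1_{i+1})$ sentence $\theta$: put $\theta$ in conjunctive normal form via Lemma \ref{CNF}, split externally over the $2^n$ profiles of which disjuncts hold, apply a single $\Sigma^1_{i+1}$ instance of $\beta_i\Rfn$ to the conjunction of the true $\sigma_m$'s, and recover the true $\pi_m$'s inside the resulting $\beta_i$-model by downward $\Pi^1_{i+1}$-absoluteness. This is in effect a direct, self-contained proof of the nontrivial inclusion $\beta_i\Rfn(\Sigma^1_{i+1})_0\vdash\beta_i\Rfn(\B(\Pi^1_{i+1}))_0$, which the paper instead obtains by routing through $\Th_{\B(\Pi^1_{i+1})}(\Sigma^1_i\hyp\SDC_0)$ and the $\beta_i$-model reflection of $\Sigma^1_i\hyp\SDC_0$. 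The paper's route is shorter given its preceding corollary; yours avoids that corollary entirely and makes the absoluteness mechanism explicit. The only points to tidy are routine: a finite conjunction of $\Sigma^1_{i+1}$ sentences is not literally $\Sigma^1_{i+1}$ but is $\ACAo$-provably equivalent to one (and the witnessing $\beta_i$-model satisfies $\ACAo$, so it also satisfies the original conjunction), and the case split is a purely propositional disjunction over finitely many externally indexed cases, so no internal definition of the set $S$ is needed, as you note.
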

\begin{proof}
  We show that $\beta_i\Rfn(\B(\Pi^1_{i+1}))_0$ is not finitely axiomatizable.
  For the sake of contradiction, suppose that $\beta_i\Rfn(\B(\Pi^1_{i+1}))_0$ is finitely axiomatizable.
  Then, there are finitely many instances $\rho_0,\ldots,\rho_{n-1}$ of $\beta_i\Rfn(\B(\Pi^1_{i+1}))_0$ such that
  $\ACAo + \bigwedge_{i < n}\rho_i$ axiomatizes $\beta_i\Rfn(\B(\Pi^1_{i+1}))_0$. Let $\rho$ be the conjunction of $\rho_0,\ldots,\rho_n$.
  Since $\rho$ is in $\B(\Pi^1_{i+1})$, we have
  $\ACAo + \rho \vdash \exists \M(\beta_i(\M) \land \M \models \rho)$.
  Therefore, $\ACAo + \rho$ proves the consistency of itself, a contradiction.
\end{proof}

\begin{theorem}
$\Sigma^1_{i}\hyp\SDC_0$ is $\Sigma^1_{i+1}$-conservative over $\ACAo + \{\beta^i_i\RFN^-(n) : n \in \omega\}$.
  In particular,
  $\Pi^1_1\hyp\CAo$ is $\Sigma^1_2$-conservative over the theory $\ACAo + \{\exists Y(Y = \HJ^n(\varnothing)) : n \in \omega\}$.
\end{theorem}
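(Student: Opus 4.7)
The plan is to adapt the compactness argument of Lemma \ref{Lem_cons_or}, combined with downward persistence of $\Pi^1_{i+1}$-sentences across $\beta_i$-submodels, to the setting of a single $\Sigma^1_{i+1}$-sentence.

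First I would suppose for contradiction that $\Sigma^1_i\hyp\SDC_0 \vdash \sigma$ for some $\Sigma^1_{i+1}$-sentence $\sigma$ while $T := \ACAo + \{\beta^i_i\RFN^-(n) : n \in \omega\} \not\vdash \sigma$. Pick a model $\M \models T + \lnot\sigma$ and write $\lnot\sigma \equiv \forall X\, \tau(X)$ with $\tau \in \Sigma^1_i$. Introducing fresh set constants $C_0, C_1, \ldots$, I would form
\[
T' := \ACAo + \lnot\sigma + \{\beta^i_i(C_0,\ldots,C_n) : n \in \omega\}.
\]
Each finite fragment of $T'$ is realized in $\M$, interpreting the finitely many constants involved by witnesses supplied by $\beta^i_i\RFN^-(N)$ for $N$ sufficiently large (the weaker instances $\beta^i_i(C_0,\ldots,C_m)$ for $m \le N$ are automatic from the single sequence of witnesses, since in a chain $Y_0 \in \cdots \in Y_N$ each $Y_m$ is a $\beta_i$-model and hence reflects the $\Pi^1_i$-statements $\beta_i(Y_{m'})$ for $m' < m$). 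By compactness $T'$ has a model $\M' = (\N^{\M'}, S^{\M'}, \{C_n\}_n)$.

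Next I would form the reduct $\M'' = (\N^{\M'}, S'')$ with $S'' = \bigcup_n \{X \in S^{\M'} : \M' \models X \in C_n\}$. Since $C_n \in C_{n+1}$ and $\M' \models \beta_i(C_n)$ for every $n$, each $X \in S''$ lies in some $C_n \in S''$ with $\M' \models \beta_i(C_n)$, so Lemma \ref{lem : closed under beta_i} yields that $\M''$ is a $\beta_i$-submodel of $\M'$ and a model of $\Sigma^1_i\hyp\SDC_0$. For every $X \in S'' \subseteq S^{\M'}$ we have $\M' \models \tau(X)$ from $\M' \models \lnot\sigma$, and the $\beta_i$-submodel property transfers this to $\M'' \models \tau(X)$; thus $\M'' \models \lnot\sigma$, contradicting $\Sigma^1_i\hyp\SDC_0 \vdash \sigma$.

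For the \emph{in particular} clause I would combine the general theorem with the $\Pi^1_4$-conservation of $\Sigma^1_1\hyp\SDC_0$ over $\Pi^1_1\hyp\CAo$ \cite[VII.6.20]{Simpson}: since $\Sigma^1_2 \subseteq \Pi^1_4$, the two theories share their $\Sigma^1_2$-consequences. It then remains to identify, over $\ACAo$, the schema $\{\beta^1_1\RFN^-(n) : n \in \omega\}$ with $\{\exists Y(Y = \HJ^n(\varnothing)) : n \in \omega\}$: a $\beta_1$-model containing $X$ computes $\HJ(X)$, while conversely $\HJ^n(\varnothing)$ codes a chain $Y_0 \in \cdots \in Y_n$ of $\beta_1$-models witnessing $\beta^1_1\RFN^-(n)$. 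The main subtle point in the main argument is the downward transfer of $\lnot\sigma$ from $\M'$ to the reduct $\M''$: this is exactly where working with $\beta^i_i\RFN^-$ (rather than some weaker $\beta^i_e\RFN^-$ with $e < i$) is decisive, since the full $\beta_i$-submodel property is what absorbs the universal $X$-quantifier in $\lnot\sigma$ past its $\Sigma^1_i$-matrix $\tau$.
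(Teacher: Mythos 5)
Your argument is correct and is essentially the paper's own proof: the paper proves this theorem by remarking that ``the same argument for Lemma \ref{Lem_cons_or} works,'' and what you have written is exactly that compactness-plus-$\beta_i$-submodel argument, with the instances $\beta^i_i(C_0,\ldots,C_n)$ now realized directly from the axioms $\beta^i_i\RFN^-(N)$ rather than via Lemma \ref{lem iteration of beta-model}. Your handling of the finite-fragment realizability and the downward transfer of $\lnot\sigma$ through the $\beta_i$-submodel, as well as the identification with the iterated hyperjump hierarchy for the ``in particular'' clause, all match the intended proof.
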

\begin{proof}
  The same argument for Lemma \ref{Lem_cons_or} works.
\end{proof}

In \cite{suzuki_yokoyama_pi12}, it is proved that $\{\exists Y(Y = \HJ^n(\varnothing)) : n \in \omega\}$ is equivalent to $\{(\Sigma^{0,\varnothing}_1)_n\hyp\mathsf{Det} : n \in \omega\}$ over $\ACAo$.
Here, $(\Sigma^{0,\varnothing}_1)_n\hyp\mathsf{Det}$ is the determinacy of Gale-Stewart games with payoff sets of complexity  $(\Sigma^{0,\varnothing}_1)_n$.
Thus, we have the following characterization of $\Sigma^1_2$-consequences of $\Pi^1_1\hyp\CAo$.
\begin{corollary}
  $\Pi^1_1\hyp\CAo$ is $\Sigma^1_2$-conservative over $\ACAo + \{(\Sigma^{0,\varnothing}_1)_n\hyp\mathsf{Det} : n \in \omega\}$. 
\end{corollary}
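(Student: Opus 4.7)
The plan is to compose the immediately preceding theorem with the schema equivalence cited from \cite{suzuki_yokoyama_pi12}, and nothing else. The preceding theorem already delivers that $\Pi^1_1\hyp\CAo$ is $\Sigma^1_2$-conservative over $\ACAo + \{\exists Y(Y = \HJ^n(\varnothing)) : n \in \omega\}$, so all that remains is to replace the hyperjump-existence schema by the determinacy schema inside the base theory. The cited equivalence states that, for each $n$, $\exists Y(Y = \HJ^n(\varnothing))$ is $\ACAo$-provably equivalent to $(\Sigma^{0,\varnothing}_1)_n\hyp\mathsf{Det}$; hence the two extensions $\ACAo + \{\exists Y(Y = \HJ^n(\varnothing)) : n \in \omega\}$ and $\ACAo + \{(\Sigma^{0,\varnothing}_1)_n\hyp\mathsf{Det} : n \in \omega\}$ prove exactly the same sentences, and in particular have identical sets of $\Sigma^1_2$-consequences.

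In writing the proof itself I would simply note the following chain: for any $\Sigma^1_2$ sentence $\sigma$, $\Pi^1_1\hyp\CAo \vdash \sigma$ implies $\ACAo + \{\exists Y(Y = \HJ^n(\varnothing)) : n \in \omega\} \vdash \sigma$ by the preceding theorem, and therefore $\ACAo + \{(\Sigma^{0,\varnothing}_1)_n\hyp\mathsf{Det} : n \in \omega\} \vdash \sigma$ by the level-wise equivalence. The converse inclusion is immediate, since $\Pi^1_1\hyp\CAo$ proves each instance of the hyperjump-existence schema and therefore, via the same equivalence, each instance of $(\Sigma^{0,\varnothing}_1)_n\hyp\mathsf{Det}$. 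There is essentially no obstacle here; the corollary is a one-line repackaging of two previously established facts into determinacy-theoretic language, and the only thing worth remarking on is that schema-equivalence over $\ACAo$ is strong enough to identify the deductive closures of the two extensions, so no uniformity or effective translation in $n$ is needed.
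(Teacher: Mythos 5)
Your proposal is correct and is exactly the paper's intended argument: the corollary is obtained by composing the preceding $\Sigma^1_2$-conservativity theorem for $\ACAo + \{\exists Y(Y = \HJ^n(\varnothing)) : n \in \omega\}$ with the equivalence of the hyperjump and determinacy schemas over $\ACAo$ cited from the earlier work. Your closing remark that schema-level equivalence already identifies the deductive closures, so no uniformity in $n$ is needed, is a correct and worthwhile observation.
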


We then see the relationship between $\beta^i_e\RFN^-$ and $\RFN^i_e$.
\begin{definition}
  Let $\Gamma$ be a class of formulas.
  For theories $T$ and $T'$, we say
  $T$ is a $\Gamma$-extension of $T'$ (written $T' \subseteq_{\Gamma} T)$ if any sentence in $\Gamma$ which is provable from
  $T'$ is also provable from $T$. We write $T' \subsetneq_{\Gamma} T$ if there is a sentence in $\Gamma$ which is provable from
  $T$ but not provable from $T'$ in addition.
  We also write $T =_{\Gamma} T'$ when $T$ and $T'$ are $\Gamma$-equivalent, that is, they prove the same $\Gamma$-sentence.
\end{definition}

As $\ACAo + \{\beta^1_0\RFN(n) : n \in \omega\} =_{\Pi^1_2} \Pi^1_1\hyp\CAo$ and
$\ACAo + \{\exists Y(Y = \HJ^n(\varnothing)) : n \in \omega\} =_{\Sigma^1_2} \Pi^1_1\hyp\CAo$,
these three theories are $\Pi^1_1$-equivalent.
Thus, we have two approximations of the $\Pi^1_1$-consequences of $\Pi^1_1\hyp\CAo$; one is the hierarchy of
$\beta^1_0\RFN(0),\beta^1_0\RFN(1),\beta^1_0\RFN(2),\ldots,$ and the other is the hierarchy of the iterated hyperjumps
of the empty set.
In what follows, we compare these two approximations.

\begin{lemma}\label{lem: separation by beta-models 1}
  Let $T$ and $T'$ be finitely axiomatized theories including $\ACAo$
  such that $T'$ proves the $\beta_i$-model reflection of $T$.
  Then, $T \subsetneq_{\Pi^1_{i+2}} T'$ and $T'$ proves the consistency of $T$.
\end{lemma}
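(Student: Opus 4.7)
The plan is to split the conclusion into three parts and prove them in their natural order: first, the inclusion $T \subseteq_{\Pi^1_{i+2}} T'$; second, $T' \vdash \Con(T)$; and third, the strict inequality via Gödel's second incompleteness theorem applied to the $\Pi^0_1$-sentence $\Con(T)$.

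For the inclusion, let $\sigma \equiv \forall X\, \psi(X)$ be any $\Pi^1_{i+2}$-sentence provable from $T$, with $\psi(X) \equiv \exists Y\, \theta(X,Y)$ and $\theta \in \Pi^1_i$. Reasoning inside $T'$, I take an arbitrary $X$ and use the $\beta_i$-model reflection of $T$ (which $T'$ proves by hypothesis) to produce $\M$ with $X \in \M$, $\beta_i(\M)$, and $\M \models T$. Since $T \vdash \sigma$, in particular $\M \models \psi(X)$, so a witness $Y \in \M$ yields $\M \models \theta(X,Y)$. The $\beta_i$-property of $\M$ transfers this $\Pi^1_i$-fact to the ambient structure, giving $\theta(X,Y)$ and hence $\psi(X)$. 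This is really just the one-witness upward-absoluteness of $\Sigma^1_{i+1}$-formulas from $\beta_i$-models.

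For the consistency claim, I again reason in $T'$: the reflection of $T$ provides in particular an $\omega$-model $\M$ with $\M \models T$ (the $\beta_i$-clause is not needed for this step). Formalized soundness for $\omega$-models, which is available in $\ACAo$ and hence in $T'$, then yields $\Con(T)$. Since $\Con(T)$ is $\Pi^0_1$ and therefore $\Pi^1_{i+2}$, and since $T$ is consistent, finitely axiomatized, and extends $\ACAo$, Gödel's second incompleteness theorem ensures $T \not\vdash \Con(T)$, giving the strict containment.

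The main technical obstacle, though not deep, is the careful bookkeeping of the absoluteness direction in the first step: one must separate the $\Pi^1_i$-absoluteness furnished by $\beta_i(\M)$ from the external quantifier structure of $\sigma$, and realize that the existence of a $\beta_i$-model containing the arbitrary parameter $X$ is precisely what replaces $\Pi^1_{i+2}$-elementarity here. A related subtlety is that the strict inequality tacitly presumes the meta-level consistency of $T$; if $T$ were inconsistent, $\ACAo$ would refute the existence of any $\omega$-model of $T$, so $T'$ would be inconsistent as well and the strictness would fail.
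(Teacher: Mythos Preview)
Your proposal is correct and follows essentially the same approach as the paper: both argue the inclusion by writing a $\Pi^1_{i+2}$-consequence of $T$ as $\forall X\exists Y\theta$ with $\theta\in\Pi^1_i$, invoking the $\beta_i$-model reflection of $T$ inside $T'$ to obtain $\M\ni X$ with $\M\models T$, and then pulling the witness $Y$ out via $\Pi^1_i$-absoluteness. The paper's proof is terser and leaves the consistency and strictness claims implicit; you fill these in correctly (and your remark that strictness tacitly assumes the meta-level consistency of $T$ is a fair observation).
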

\begin{proof}
  Let $\sigma \equiv \forall X \exists Y \theta(X,Y)$ be a $\Pi^1_{i+2}$ sentence
  provable from $T$.
  We work in $T'$. Take an arbitrary $X$ and a $\beta_i$-model $\M$ such that
  $X \in \M$ and $\M \models T$.
  Then, $\M \models \exists Y \theta(X,Y)$.
  Since $\theta$ is $\Pi^1_i$ and $\M$ is a $\beta_i$ model,
  $\exists Y \theta(X,Y)$ holds.
\end{proof}

By a similar proof, we also have the following.

\begin{lemma}\label{lem: separation by beta-models 2}
  Let $T$ and $T'$ be finitely axiomatized theories including $\ACAo$
  such that $T'$ proves the existence of a $\beta_i$-model of $T$.
  Then, $T \subsetneq_{\Sigma^1_{i+1}} T'$ and $T'$ proves the consistency of $T$.
\end{lemma}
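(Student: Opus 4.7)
The plan is to mirror the argument of Lemma \ref{lem: separation by beta-models 1}, but adapted to the existence (rather than reflection) form of the hypothesis and to $\Sigma^1_{i+1}$ (rather than $\Pi^1_{i+2}$) consequences. The proof splits into a containment part and a strictness part, and the proof of the consistency statement will supply the witness for strictness.

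For the containment, I would take an arbitrary $\Sigma^1_{i+1}$ sentence $\sigma$ provable from $T$ and write it as $\sigma \equiv \exists X\,\theta(X)$ with $\theta \in \Pi^1_i$. Reasoning inside $T'$, I would invoke the hypothesis to fix a coded $\beta_i$-model $\M$ with $\M \models T$. Since $T \vdash \sigma$ and $\M \models T$, we get $\M \models \exists X\,\theta(X)$, so there is some $X \in \M$ with $\M \models \theta(X)$. Because $\M$ is a $\beta_i$-model and $\theta$ is $\Pi^1_i$, the formula $\theta(X)$ holds in the ambient universe, which witnesses $\sigma$. This gives $T' \vdash \sigma$ and hence $T \subseteq_{\Sigma^1_{i+1}} T'$.

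For the consistency statement, I would use the standard fact, provable already in $\ACAo$, that the existence of a (coded) $\omega$-model of a finitely axiomatized theory implies its formal consistency. Since $T'$ extends $\ACAo$ and proves $\exists \M(\beta_i(\M) \land \M \models T)$, in particular it proves that $T$ has a coded $\omega$-model, and therefore $T' \vdash \Con(T)$.

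Strictness is then immediate: the sentence $\Con(T)$ is $\Pi^0_1$, hence belongs to $\Sigma^1_{i+1}$; it is provable in $T'$ by the previous paragraph, but by G\"odel's second incompleteness theorem it is not provable in $T$ itself (which is consistent, since $T'$ proves it has a model, assuming $T'$ is consistent in the metatheory). I do not anticipate any real obstacle; the only mild subtlety is the formalization of \emph{``having an $\omega$-model implies consistency''} within $\ACAo$, but this is a standard arithmetized soundness argument for finitely axiomatized theories and is already tacitly used in the paper's preceding applications.
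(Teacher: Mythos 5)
Your proof is correct and follows essentially the same route the paper intends (the paper simply says the argument is analogous to Lemma \ref{lem: separation by beta-models 1}): upward persistence of a parameter-free $\Sigma^1_{i+1}$ sentence from a coded $\beta_i$-model of $T$ gives the containment, the arithmetized soundness of coded $\omega$-models gives $T' \vdash \Con(T)$, and G\"odel's second incompleteness theorem gives strictness. Your parenthetical caveat about consistency is the same implicit assumption the paper makes, so nothing is missing.
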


\begin{theorem}
  Let $i,e \in \omega$ such that $e < i $.
  Then, for each $n \in \omega$,
  $\beta^i_e\RFN(n+1)$ proves the $\beta_e$-model reflection of $\beta^i_i\RFN^-(n)$ over $\ACAo$.
  In particular, we have
  $\beta^i_i\RFN^-(n) \subsetneq_{\Pi^1_{e+2}} \beta^i_{e}\RFN(n+1)$ over $\ACAo$.
\end{theorem}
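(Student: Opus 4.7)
The plan is to take a chain $\M_0 \in \cdots \in \M_{n+1}$ produced by $\beta^i_e\RFN(n+1)$, and argue that the top model $\M_{n+1}$ witnesses the $\beta_e$-model reflection of $\beta^i_i\RFN^-(n)$, using the inner subchain $\M_0,\ldots,\M_n$ as the internal witnesses to $\beta^i_i\RFN^-(n)$ itself. The ``in particular'' clause will then follow immediately from Lemma \ref{lem: separation by beta-models 1} applied to the finitely axiomatized theories $T:=\ACAo + \beta^i_i\RFN^-(n)$ and $T':=\ACAo + \beta^i_e\RFN(n+1)$.

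Reasoning in $\ACAo + \beta^i_e\RFN(n+1)$, I would fix $X$ and apply the axiom to produce $\M_0,\ldots,\M_{n+1}$ satisfying $X \in \M_0$, $\beta^i_e(\M_0,\ldots,\M_{n+1})$, and $\M_m \models \ACAo$ for each $m$. The candidate $\beta_e$-model is $\M := \M_{n+1}$: the clause $\beta_e(\M)$ is part of $\beta^i_e$, $\M \models \ACAo$ is direct, and $X \in \M$ follows by pushing $X$ up the chain, using $\ACAo$ in each $\M_{m+1}$ to arithmetically recover $X$ from the code $\M_m$. It then remains to verify $\M_{n+1} \models \beta^i_i\RFN^-(n)$, for which I propose the internal witnesses $Y_m := \M_m$ for $m \leq n$. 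The containments $\M_m \in \M_{m+1}$ for $m < n$ are arithmetic and transfer into $\M_{n+1}$, and the external $\beta_i$-ness of $\M_n$ viewed inside $\M_{n+1}$ is precisely the clause $\M_{n+1} \models \beta_i(\M_n)$ supplied by $\beta^i_e$.

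The heart of the argument is to verify, inside $\M_{n+1}$, that $\M_n \models \beta_i(\M_m)$ for each $m < n$. The approach has two ingredients. First, using $\ACAo$ inside $\M_{n+1}$, iterating $\M_m \in \M_{m+1} \in \cdots \in \M_n$ places $\M_m$ as an element of $\M_n$, so that $\beta_i(\M_m)$ is a $\Pi^1_i$ formula with a parameter from $\M_n$. Second, since $\M_{n+1} \models \beta_i(\M_n)$, $\M_n$ is a $\beta_i$-model as seen from $\M_{n+1}$, so the characterization of $\beta_i$-models applied internally in $\M_{n+1}$ yields the equivalence $\beta_i(\M_m) \leftrightarrow [\M_n \models \beta_i(\M_m)]$; the left-hand side holds in $\M_{n+1}$ by the $\beta^i_e$ clause, hence so does the right-hand side, as required.

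The main obstacle will be this last step, routing the external fact $\M_{n+1} \models \beta_i(\M_m)$ into the internal statement $\M_n \models \beta_i(\M_m)$. It hinges on correctly identifying $\M_m$ as a parameter of $\M_n$ by iterating set-code membership under $\ACAo$, and on applying the $\beta_i$-model equivalence in its internal form inside $\M_{n+1}$; the ideas are standard, but the nested relativizations demand care.
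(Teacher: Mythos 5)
Your proposal is correct and follows essentially the same route as the paper: take the chain $\M_0,\ldots,\M_{n+1}$ given by $\beta^i_e\RFN(n+1)$ and show that $\M_{n+1}$, which is a $\beta_e$-model containing $X$, satisfies $\beta^i_i(\M_0,\ldots,\M_n)$ and hence $\beta^i_i\RFN^-(n)$, then invoke Lemma~\ref{lem: separation by beta-models 1}. The internal transfer step you flag as the main obstacle (deriving $\M_n \models \beta_i(\M_m)$ from $\M_{n+1} \models \beta_i(\M_n)$ and $\M_{n+1} \models \beta_i(\M_m)$) is exactly the content of the remark following the definition of $\beta^i$, which the paper's one-line proof uses implicitly; your write-up just makes it explicit.
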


\begin{proof}
  Take an arbitrary $X$. By $\beta^i_e\RFN(n+1)$, take $\M_0,\ldots,\M_{n+1}$ such that
  $X \in \M_{0}$ and $\beta^i_e(\M_0,\ldots,\M_{n+1})$. Then, $\M_{n+1}$ is a $\beta_e$-model and
  $\M_{n+1} \models \beta^i_i(\M_0,\ldots,\M_n)$. In particular, $\M_{n+1} \models \beta^i_i\RFN^{-}(n)$.
  The inculusion $\beta^i_i\RFN^-(n) \subsetneq_{\Pi^1_{e+2}} \beta^i_{e}\RFN(n+1)$ follows from Lemma \ref{lem: separation by beta-models 1}.
\end{proof}

\begin{theorem}\label{thm: Sigma^1_i-1 extension}
  Let $i \in \omega$ such that $i > 0$.
  Then, for each $n \in \omega$, 
  $\beta^i_i\RFN^{-}(n+1)$ proves the existence of a $\beta_i$-model of $\beta^i_{i-1}\RFN(n+1)$.
  In particular, we have 
  $\beta^i_{i-1}\RFN(n+1) \subsetneq_{\Sigma^1_{i+1}} \beta^i_{i}\RFN^-(n+1)$ over $\Sigma^1_{i-1}\hyp\SDC_0$.
\end{theorem}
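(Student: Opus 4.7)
Plan: I will take witnesses $Y_0 \in \cdots \in Y_{n+1}$ of $\beta^i_i\RFN^-(n+1)$ and show that $Y_{n+1}$ itself is the required $\beta_i$-model of $\beta^i_{i-1}\RFN(n+1)$. That $Y_{n+1}$ is a $\beta_i$-model is immediate from the $\beta_i(Y_{n+1})$ clause in $\beta^i_i$, so the real content is to verify $Y_{n+1} \models \beta^i_{i-1}\RFN(n+1)$---that is, for each $X \in Y_{n+1}$ to produce inside $Y_{n+1}$ a chain $\M_0 \in \cdots \in \M_{n+1}$ with $X \in \M_0$ satisfying $\beta^i_{i-1}$.

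First I would show $Y_{n+1} \models \Sigma^1_{i-1}\hyp\SDC_0$. By Theorem \ref{thm equiv of SDC and beta} this is equivalent to internal $\beta_{i-1}$-model reflection, and each instance $\exists \M(X \in \M \land \beta_{i-1}(\M))$ is a $\Sigma^1_i$ sentence, since $\beta_{i-1}$ is $\Pi^1_{i-1}$. These instances hold externally by the base-theory $\Sigma^1_{i-1}\hyp\SDC_0$ and transfer inside $Y_{n+1}$ by its $\beta_i$-elementarity. In particular, clause (3) of Theorem \ref{thm equiv of SDC and beta}---reflection of $\Sigma^1_{i+1}$ sentences with a parameter---becomes available internally.

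For each $X \in Y_{n+1}$, I would then apply this internal reflection to the $\Sigma^1_{i+1}$ sentence $\sigma(X) \equiv \exists \M_0, \ldots, \M_n(X \in \M_0 \land \beta^i_i(\M_0, \ldots, \M_n))$. Once $\sigma(X)$ is known to hold internally, reflection produces $\M_{n+1} \in Y_{n+1}$ with $X \in \M_{n+1}$, $\beta_{i-1}(\M_{n+1})$, and $\M_{n+1} \models \sigma(X)$; the internal chain $\M_0 \in \cdots \in \M_n \in \M_{n+1}$ combined with the transitivity of $\beta_i$-ness from the remark after the definition of $\beta^i$ then gives $\M_{n+1} \models \beta_i(\M_m)$ for all $m \leq n$, so $(\M_0, \ldots, \M_{n+1})$ witnesses $\beta^i_{i-1}$ with $X \in \M_0$.

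The hard part will be verifying that $\sigma(X)$ genuinely holds inside $Y_{n+1}$ for every $X$, since the obvious witness---the chain $Y_0, \ldots, Y_n$ inside $Y_{n+1}$---only accommodates $X$'s already lying in $Y_0$. My plan to close this gap is to replace $\sigma(X)$ by the $\Sigma^1_i$ sentence $\exists \M_{n+1}\exists \M_0, \ldots, \M_n(X \in \M_0 \land \M_0 \in \cdots \in \M_n \in \M_{n+1} \land \beta_{i-1}(\M_{n+1}) \land \bigwedge_{m \leq n} \M_{n+1} \models \beta_i(\M_m))$, which packages the whole target chain together with its $\beta_{i-1}$-top. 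This is $\Sigma^1_i$ because $\beta_{i-1}$ is $\Pi^1_{i-1}$ and the clause $\M_{n+1} \models \beta_i(\M_m)$ is merely arithmetic in $(\M_{n+1}, \M_m)$, so the $\M_m$'s need only ``look $\beta_i$'' internally to $\M_{n+1}$ rather than be truly $\beta_i$ externally. An external witness for arbitrary $X$ is then obtained by using $\Sigma^1_{i-1}\hyp\SDC_0$ to supply a $\beta_{i-1}$-model $\M_{n+1}$ containing $X$ together with the chain $Y_0, \ldots, Y_n$, arranging for $X$ to sit at the bottom via a suitable extension, and exploiting that $\M_{n+1}$'s internal verification of $\beta_i(\M_m)$ is controllable since it is only required to hold in $\M_{n+1}$'s view. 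By $\beta_i$-elementarity this $\Sigma^1_i$ statement transfers inside $Y_{n+1}$ and directly yields the desired chain, completing the verification.
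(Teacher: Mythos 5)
There is a genuine gap, and it sits exactly where you flag ``the hard part'': choosing $Y_{n+1}$, the \emph{top} of the given chain, as the target $\beta_i$-model cannot be made to work, and the ``suitable extension'' step is the missing ingredient. To get $Y_{n+1} \models \beta^i_{i-1}\RFN(n+1)$ you must, for \emph{every} $X \in Y_{n+1}$, exhibit a chain $\M_0 \in \cdots \in \M_{n+1}$ with $X$ in the bottom model $\M_0$ and with the $\beta_{i-1}$-model $\M_{n+1}$ certifying $\beta_i(\M_m)$ for all $m \leq n$. The only chain available, $Y_0 \in \cdots \in Y_n$, accommodates only those $X$ already in $Y_0$, and the ambient theory $\Sigma^1_{i-1}\hyp\SDC_0$ supplies, for an arbitrary $X$, only $\beta_{i-1}$-models containing $X$; it gives no way to manufacture a coded model $\M_0 \ni X$ together with a $\beta_{i-1}$-model into which all $\Pi^1_i$ truths of $\M_0$ lift. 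That is a nontrivial $\Sigma^1_i$-elementarity requirement with no provenance in the hypotheses, and the remark that the verification ``is only required to hold in $\M_{n+1}$'s view'' does not circumvent it: $\M_{n+1} \models \beta_i(\M_0)$ is still a substantive $\Pi^1_i$ fact about the pair that has to be arranged. Consequently the packaged $\Sigma^1_i$ sentence you want to transfer into $Y_{n+1}$ is never shown to be true externally for arbitrary $X \in Y_{n+1}$, and the transfer never gets off the ground.

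The paper's proof avoids this entirely by targeting the \emph{bottom} model $\M_0$ of the chain: every $X \in \M_0$ automatically sits in the bottom of the single external witness chain $\M_0 \in \cdots \in \M_n \in \M'$, where $\M'$ is obtained by applying $\Sigma^1_{i-1}\hyp\SDC_0$ \emph{inside} $\M_{n+1}$ to get a $\beta_{i-1}$-model $\M'$ with $\M_n \in \M'$, and $\M' \models \beta_i(\M_m)$ holds because $\Pi^1_i$ sentences true in $\M_{n+1}$ descend to its $\beta_{i-1}$-submodel $\M'$. The resulting statement $(\ast)$ is $\Sigma^1_i$ with parameter $X \in \M_0$, true in $\M_{n+1}$, hence true, hence true in the genuine $\beta_i$-model $\M_0$ --- one uniform external chain settles all $X \in \M_0$ at once, and $\beta_i$-reflection internalizes it. Your reduction to a $\Sigma^1_i$ statement and your observation that $\M_{n+1}$'s belief in $\beta_i(\M_m)$ is weaker than genuine $\beta_i$-ness are both correct and are indeed the paper's key devices, but they only pay off once the target model is chosen so that the universally quantified $X$ ranges over sets that already lie in the bottom of an available chain.
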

\begin{proof}
  Let $\M_0,\ldots,\M_{n+1}$ be such that $\beta^i_{i}(\M_0,\ldots,\M_{n+1})$.
  Then, $\M_0$ is a $\beta_i$-model, and $\M_{n+1}$ is a model of $\Sigma^1_{i-1}\hyp\SDC_0$.
  We prove that $\M_0$ is a model of $\beta^i_{i-1}\RFN(n+1)$.
  Since $\M_{n+1}$ is a model of $\Sigma^1_{i-1}\hyp\SDC$, there exists an $\M' \in \M_{n+1}$ such that
  $\M_n \in \M'$ and $\M_{n+1} \models \beta_{i-1}(\M')$.
  We show that $\M'$ believes that $\M_n$ is a $\beta_i$-model.
  Take a $\Pi^1_i$ sentence $\sigma$ with parameters from $\M_n$ such that $\M_n \models \sigma$.
  Then, $\M_{n+1} \models \sigma$ because $\M_n$ is a $\beta_i$-submodel of $\M_{n+1}$.
  Since $\M'$ is a $\beta_{i-1}$-submodel of $\M_{n+1}$, $\M' \models \sigma$.

  Now, for any $X \in \M_0$,  $\M_{n+1}$ satisfies
  \begin{align*}
    (\ast) \,\, \exists Y_0,\ldots,Y_{n+1} (X \in Y_0 \land \beta^i_{i-1}(Y_0,\ldots,Y_n,Y_{n+1}))
  \end{align*}
  via taking $Y_m = \M_m$ for $m < n+1$ and $Y_{n+1} = \M'$.
  Since $(\ast)$ is a $\Sigma^1_{i}$ condition and $\M_0$ is a $\beta_i$-submodel of $\M_{n+1}$,
  $\M_0$ also satisfies $(\ast)$. 
  The inclusion $\beta^i_{i-1}\RFN(n+1) \subsetneq_{\Sigma^1_{i+1}} \beta^i_{i}\RFN^-(n+1)$ follows from Lemma \ref{lem: separation by beta-models 2}.
\end{proof}

\begin{corollary}
  Over $\ACA_0^+$,
  \begin{align*}
    \exists Y(Y= \HJ^{n+1}(\varnothing)) \subsetneq_{\Pi^1_{2}} \beta^1_{0}\RFN(n+1) \subsetneq_{\Sigma^1_{2}} \exists Y(Y = \HJ^{n+2}(\varnothing)).
  \end{align*}
\end{corollary}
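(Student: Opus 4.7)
The plan is to combine the two preceding theorems with an identification of iterated $\beta_1$-models and iterated hyperjumps over $\ACA_0^+$. Instantiating those theorems at $i=1$, $e=0$ yields the chain
\[
\beta^1_1\RFN^-(n) \subsetneq_{\Pi^1_2} \beta^1_0\RFN(n+1) \subsetneq_{\Sigma^1_2} \beta^1_1\RFN^-(n+1),
\]
already valid over $\ACAo$. What remains is to replace the outer sentences by their hyperjump counterparts.

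The key bridging lemma will be the following: over $\ACA_0^+$, for each $k \in \omega$, the sentences $\beta^1_1\RFN^-(k)$ and $\exists Y(Y = \HJ^{k+1}(\varnothing))$ are provably equivalent. The forward direction is straightforward: given a stack $\M_0 \in \M_1 \in \cdots \in \M_k$ witnessing $\beta^1_1(\M_0,\ldots,\M_k)$, every $\M_j$ is a genuine $\beta_1$-model containing $\M_{j-1}$, so $\HJ(\M_{j-1})$ is arithmetical in $\M_j$; iterating produces $\HJ^{k+1}(\varnothing)$ arithmetically from $\M_k$. For the reverse direction, working in $\ACA_0^+$, I would recursively build a coded $\beta^1_1$-stack by taking $\M_j$ to be the $\omega$-model of sets hyperarithmetic over $\M_{j-1}$; the code for each such $\M_j$ is uniformly computable from $\HJ^{j+1}(\varnothing)$, and the iterated Turing-jump comprehension supplied by $\ACA_0^+$ is precisely what lets one both construct these codes and certify, inside $\M_{j+1}$, the $\Pi^1_1$ statement $\beta_1(\M_j)$.

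Once the bridging equivalence is in hand, the corollary will follow by substitution into the chain. For the first strict inclusion, $\beta^1_0\RFN(n+1)$ proves $\beta^1_1\RFN^-(n)$ and hence $\exists Y(Y = \HJ^{n+1}(\varnothing))$, while contributing strictly more $\Pi^1_2$-content by virtue of the first $\subsetneq_{\Pi^1_2}$ in the chain. For the second, $\exists Y(Y = \HJ^{n+2}(\varnothing))$ is equivalent over $\ACA_0^+$ to $\beta^1_1\RFN^-(n+1)$, which strictly $\Sigma^1_2$-extends $\beta^1_0\RFN(n+1)$ by the second $\subsetneq_{\Sigma^1_2}$ in the chain.

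The main obstacle is the reverse direction of the bridging lemma: obtaining not merely the iterated hyperjumps themselves but an actual coded $\beta^1_1$-stack of length $k+1$ from $\HJ^{k+1}(\varnothing)$. The subtlety is that each constructed coded model must be recognized by the next stage as a $\beta_1$-model, a $\Pi^1_1$ condition; verifying this uniformly across the stack is exactly what distinguishes the $\ACA_0^+$-version of the argument from its purely $\ACAo$ analogue.
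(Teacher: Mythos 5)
Your overall plan is the paper's: instantiate the two preceding theorems at $i=1$, $e=0$, and transfer along a provable equivalence (over $\ACA_0^+$) between $\beta^1_1\RFN^-(k)$ and $\exists Y(Y=\HJ^{k+1}(\varnothing))$. The forward direction of your bridging lemma is fine. The problem is the reverse direction: the coded $\omega$-model $\HYP(Z)$ of sets hyperarithmetical in $Z$ is \emph{never} a $\beta_1$-model. By Kleene's classical theorem there is a $Z$-recursive ill-founded tree with no $Z$-hyperarithmetical path, so the true $\Sigma^1_1(Z)$ sentence ``this tree has a path'' fails in $\HYP(Z)$; indeed $\HYP(Z)$ is the intersection of all $\beta$-models containing $Z$, not one of them. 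Nor can the defect be absorbed into the ``certification'' step: in a stack witnessing $\beta^1_1(\M_0,\ldots,\M_k)$ the top model $\M_k$ is a genuine $\beta_1$-model and $\beta_1(\M_j)$ is $\Pi^1_1$, so $\M_k\models\beta_1(\M_j)$ forces each $\M_j$ to be a genuine $\beta_1$-model; taking $\M_j=\HYP(\M_{j-1})$ therefore cannot work at any level. The correct construction of a coded $\beta$-model containing $Z$ from $\HJ(Z)$ is the basis-theorem argument of Simpson VII.2 (Gandy/Kleene basis theorem: one iteratively adjoins, for each true $\Sigma^1_1$ statement, a witness recursive in $\HJ(Z)$ preserving $\omega_1^Z$), and this is exactly the ``equivalence of the existence of a $\beta$-model and a hyperjump'' that the paper's one-line proof invokes; iterating it $k+1$ times from $\HJ^{k+1}(\varnothing)$ yields the required stack.

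A secondary imprecision: the first theorem does not give that $\beta^1_0\RFN(n+1)$ \emph{proves} the $\Sigma^1_2$ sentence $\beta^1_1\RFN^-(n)$ — it only gives an $\omega$-model ($\beta_0$-model) reflection of it, hence the inclusion $\beta^1_1\RFN^-(n)\subsetneq_{\Pi^1_2}\beta^1_0\RFN(n+1)$ of $\Pi^1_2$-consequences. That is all the corollary needs, since $\subsetneq_{\Pi^1_2}$ and $\subsetneq_{\Sigma^1_2}$ transfer verbatim once $\ACA_0^+$ proves $\beta^1_1\RFN^-(k)\leftrightarrow\exists Y(Y=\HJ^{k+1}(\varnothing))$, but as written your derivation of the first inclusion rests on an overclaim. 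So: right skeleton, but the key lemma's hard direction needs the basis-theorem construction in place of $\HYP$.
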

\begin{proof}
  It is immediate from the above theorems and the equivalence of the existence of a $\beta$-model and a hyperjump.
\end{proof}

\begin{remark}
  We note that the inclusions in the previous corollary also hold over a theory $T$ such that $T$ includes $\ACA_0^+$ and is a proper subtheory of $\beta^1_0\RFN(n+1)$.
  For example, the inclusions also hold over $\ATRo$.
\end{remark}

\section{Boolean combinations of $\Pi^1_e$ formulas provable from $ \Sigma^1_i\hyp\SDC_0$}
In this section, we generalize the result in the previous sections to give a characterization of $\Th_{\B(\Pi^1_e)}(\Sigma^1_i\hyp\SDC)$ for $e,i \in \omega$ such that
$e < i$.
Most arguments of this section are almost the same as those in the previous section.
That is, we use statements of the form $\sigma \to \beta^i_e\RFN(\sigma;n)$ for the characterization.
The difference between this section and the previous section is that
\begin{itemize}
  \item  $\{\sigma \to \beta^i_e\RFN^-(\sigma;1) : \sigma \in \Sigma^1_{e+1}\}$ does not include $\{\sigma \to \beta^i_e\RFN^-(\sigma;n) :n \in \omega, \sigma \in \Sigma^1_{e+1}\}$,
  \item by our compactness argument, the constructed model may not be a model of $\Sigma^1_i\hyp\SDC_0$.
\end{itemize}
To adresses the first problem, we adopt $\{\sigma \to \beta^i_e\RFN^-(\sigma;n) : n \in \omega\}$ for the characterization.
To adresses the second problem, we slightly modify the proof in the previous section.

We give a comment concering the first item.
\begin{remark}
Let $i,e \in \omega$ such that  $e < i$.
By the same proof of Theorem \ref{thm: Sigma^1_i-1 extension}, the following is provable in
 $\ACA_0$. If $\M_0,\ldots,\M_{n+1}$ are coded $\omega$-models such that
$\beta^i_e(\M_0,\ldots,\M_n)$, then $\M_0$ is a model of
$\{\sigma \to \beta^i_e\RFN^-(\sigma;n) : \sigma \in \Sigma^1_{e+1}\}$.
Therefore, $\beta^i_e\RFN^-(n+1)$ implies the consistency of $\{\sigma \to \beta^i_e\RFN^-(\sigma;n) : \sigma \in \Sigma^1_{e+1}\}$.
Hence, $\{\sigma \to \beta^i_e\RFN^-(\sigma;n) :n \in \omega, \sigma \in \Sigma^1_{e+1}\}$ implies
the consistency of $\{\sigma \to \beta^i_e\RFN^-(\sigma;1) : \sigma \in \Sigma^1_{e+1}\}$.
\end{remark}

\begin{theorem}
  Let $i,e \in \omega, e < i$.
  The following theories are equivalent.
  \begin{enumerate}
    \item $\ACAo + \{\sigma \to \beta^i_e\RFN^-(\sigma;n) : n \in \omega, \sigma \text{ is a $\Sigma^1_{e+1}$ sentence}\}$.
    \item $\Th_{\B(\Pi^1_{e+1})}(\Sigma^1_i\hyp\SDC_0)$.
  \end{enumerate}
\end{theorem}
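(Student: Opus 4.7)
The plan is to adapt the proof of Lemma \ref{Lem_cons_or}, modified as flagged in the opening of this section. For the inclusion $(1)\subseteq(2)$, each axiom $\sigma \to \beta^i_e\RFN^-(\sigma;n)$ is a Boolean combination of two $\Sigma^1_{e+1}$ sentences (hence in $\B(\Pi^1_{e+1})$), and is provable from $\Sigma^1_i\hyp\SDC_0$ by applying Lemma \ref{lem iteration of beta-model} to $\sigma \in \Sigma^1_{e+1} \subseteq \Sigma^1_{i+1}$ to get $\sigma \to \beta^i_i\RFN^-(\sigma;n)$, which implies the desired axiom since every $\beta^i_i$-chain is a $\beta^i_e$-chain.

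For the converse, Lemma \ref{CNF} applied with $i$ replaced by $e+1$ reduces the problem to: if $\pi \in \Pi^1_{e+1}$, $\sigma \in \Sigma^1_{e+1}$, and $\Sigma^1_i\hyp\SDC_0 \vdash \pi \lor \sigma$, then $(1) \vdash \pi \lor \sigma$. Arguing contrapositively, fix a model $\M$ of $(1) + \lnot\pi + \lnot\sigma$; since $\lnot\pi$ is $\Sigma^1_{e+1}$, the axioms of $(1)$ force $\M \models \beta^i_e\RFN^-(\lnot\pi;n)$ for every $n$. Adjoin new set constants $C_n$ and consider
\[
T' = \ACAo + \lnot\sigma + \{\beta^i_e(C_0,\ldots,C_n) : n \in \omega\} + (C_0 \models \lnot\pi).
\]
Finite satisfiability of $T'$ in $\M$ follows from a single instance $\beta^i_e\RFN^-(\lnot\pi;N)$: if $\beta^i_e(Y_0,\ldots,Y_N)$ holds then $Y_N \models \beta_i(Y_n)$ combined with $\beta_e(Y_N)$ and the arithmetical absoluteness of the relativized statement ``$Y_n \models \beta_i(Y_m)$'' force $Y_n \models \beta_i(Y_m)$ for $m < n < N$, and similarly $\beta_e(Y_n)$ for every $n \leq N$, so every $\beta^i_e(C_0,\ldots,C_n)$ with $n \leq N$ is simultaneously witnessed. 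Fix a model $\M'$ of $T'$, let $Y_n$ interpret $C_n$, and form $\M'' = (\N^{\M'}, S')$ with $S' = \bigcup_n \{X \in S^{\M'} : \M' \models X \in Y_n\}$.

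The heart of the argument, replacing the direct application of Lemma \ref{lem : closed under beta_i} in Section 3, is the following claim proved by induction on $k \leq i$: \emph{for each $n$ and each $\theta \in \Sigma^1_k \cup \Pi^1_k$ with parameters in $Y_n$, $Y_n \models \theta$ iff $\M'' \models \theta$.} In Section 3 the analogous claim was immediate because $\beta^i_i$-chains directly give $\M' \models \beta_i(Y_n)$; here, with only $\beta^i_e$-chains, one must work harder. The $(\Rightarrow)$ step is routine since witnesses in $Y_n$ already lie in $\M''$. For $(\Leftarrow)$, a witness in $\M''$ lies in some $Y_m$ with $m \geq n$; the inductive hypothesis yields $Y_m \models \theta$, and one then descends $Y_m \to Y_{m-1} \to \cdots \to Y_n$, using at each step that $Y_{j+1} \models \beta_i(Y_j)$ (supplied by $\beta^i_e(C_0,\ldots,C_{j+1})$) to pass $Y_{j+1} \models \theta$ to $Y_{j+1} \models (Y_j \models \theta)$, and then invoking the arithmetical absoluteness of ``$Y_j \models \theta$''. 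The main obstacle is this induction step: the essential point is that $\theta \in \Sigma^1_i \cup \Pi^1_i$ is exactly the complexity for which $Y_{j+1}$'s $\beta_i$-belief about $Y_j$ transfers $\theta$.

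Granted the claim, the conclusion is immediate. Applied to the $\Pi^1_i$ sentence $\beta_i(Y_n)$, which holds in $Y_{n+1}$ by the axioms, it gives $\M'' \models \beta_i(Y_n)$ for every $n$, so $\M''$ satisfies the $\beta_i$-model reflection principle and hence $\Sigma^1_i\hyp\SDC_0$ by Theorem \ref{thm equiv of SDC and beta}. The sentence $\lnot\sigma \in \Pi^1_{e+1}$ transfers to $\M''$ using $\M' \models \beta_e(Y_n)$ (true) and the claim, while $\lnot\pi$ transfers via the arithmetical statement ``$Y_0 \models \lnot\pi$'' and the claim at $n = 0$. Thus $\M'' \models \Sigma^1_i\hyp\SDC_0 + \lnot\pi + \lnot\sigma$, contradicting $\Sigma^1_i\hyp\SDC_0 \vdash \pi \lor \sigma$.
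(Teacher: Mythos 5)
Your proof is correct in its overall architecture and reaches the right conclusion, but at the decisive step it takes a genuinely different route from the paper. The setup coincides: reduce via Lemma \ref{CNF} to a single disjunction $\pi \lor \sigma$, run the compactness argument with the chain of constants $C_n$, and pass to the union model $\M''$. The paper, however, explicitly declines to show that $\M''$ is a model of $\Sigma^1_i\hyp\SDC_0$ (this is the second ``difference'' flagged at the start of Section 4, since Lemma \ref{lem : closed under beta_i} can only be applied with $e$ in place of $i$, the ambient model only believing $\beta_e(C_n)$); instead it proves $\M'' \models \pi \lor \sigma$ directly by invoking Lemma \ref{lemma, number of beta} to get a finite bound $n$ and then evaluating the $\Pi^1_{e+2}$ normal form inside a $\beta_e$-model $A_{k+n+1}$ that internally sees a $\beta^i_i$-chain of length $n$. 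You instead prove the stronger statement that $\M''$ satisfies $\beta_i$-model reflection outright, via a $\Sigma^1_i$-elementary chain/union argument whose links are supplied by $Y_{j+1} \models \beta_i(Y_j)$. As far as I can check, your induction is sound: the one-directional reading of $Y_{j+1}\models\beta_i(Y_j)$ does yield the full $\Sigma^1_i\cup\Pi^1_i$ agreement between consecutive links (for standard formulas with parameters in the lower model), the union theorem for such restricted elementary chains goes through level by level exactly as you describe, and the universal-formula trick converts external $\Sigma^1_i$-elementarity of $Y_n$ in $\M''$ into $\M''\models\beta_i(Y_n)$. Two cautions. First, every step of your chain argument silently uses that each $Y_j$ satisfies $\ACAo$ (so that a standard formula and its universal-formula code are interchangeable inside $Y_j$); this is automatic for $e\geq 1$ from $\M'\models\beta_e(Y_j)$, but for $e=0$ the definition of $\beta^i_e\RFN^-$ does not include the clause $Y_j\models\ACAo$ that $\beta^i_e\RFN$ does, so you should either add it or derive it, a point the paper's own Claim 2.1 also elides. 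Second, your conclusion directly contradicts the paper's cautionary remark, so you are implicitly claiming the authors' detour through Lemma \ref{lemma, number of beta} is unnecessary; I believe you are right, but this is exactly the kind of place to write out the induction in full rather than gesture at it. What each route buys: the paper's argument reuses machinery already established and only needs $\M''$ to satisfy the one sentence $\pi\lor\sigma$; yours dispenses with Lemma \ref{lemma, number of beta} entirely and produces a genuine model of $\Sigma^1_i\hyp\SDC_0 + \lnot\pi + \lnot\sigma$, which is a cleaner and more reusable conclusion if it survives scrutiny.
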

\begin{proof}
  It is trivial that any instance of (1) is in (2). Therefore, (1) is a subtheory of (2).
  We show the converse inclusion.

 Let $T$ be the theory of (1) 
  As in the previous section, it is enough to show that
  for any $\sigma \in \Sigma^1_{e+1}$ and $\pi \in \Pi^1_{e+1}$,
  if $\Sigma^1_{i}\hyp\SDC_0$ proves $\sigma \lor \pi$ then so does $T$.

  Let $\sigma \in \Sigma^1_{e+1}$ and $\pi \in \Pi^1_{e+1}$ such that
  $\Sigma^1_{i}\hyp\SDC_0$ proves $\sigma \lor \pi$. Assume that $T$ does not prove $\sigma \lor \pi$ for the sake of contradiction.
  Let $\{C_n\}_{n \in \omega}$ be new set constants. Then, the theory
  \begin{align*}
    T' = T+ \lnot \sigma  +  \{\beta^i_e(C_0,\ldots,C_n) : n \in \omega\} + C_0 \models \lnot \tau
  \end{align*}
  is consistent. Let $\M = (\N^{\M},S^{\M},\{A_n\}_{n \in \omega})$ be a model of $T'$.
  Put $S' = \bigcup_{n \in \omega} \{X \in S^{\M} : \M \models X \in A_n\}$ and
  $\M' = (\N^{\M},S')$.
  We note that
  \begin{itemize}
    \item $\M' \models \beta_e(A_n)$ for all $n \in \omega$,
    \item $\M'$ is a $\beta_e$-submodel of $\M$
  \end{itemize}
  by Lemma \ref{lem : closed under beta_i}.
  We show that $\M'$ satisfies both $\sigma \lor \pi$ and $\lnot \sigma \land \lnot \pi$, a contradiction.

  \setcounter{claimcounter}{1} 
  \begin{claim}
    $\M' \models \sigma \lor \pi$.
  \end{claim}
  \addtocounter{claimcounter}{1}
  \begin{proof}[Proof of the claim]
    Since $\sigma \lor \pi$ is a $\Pi^1_{e+2}$ sentence, there exists a $\Pi^1_{e}$ formula $\rho$ such that
    $\sigma \lor \pi \leftrightarrow \forall X \exists Y \rho$.
    Since $e < i$, we can apply Lemma \ref{lemma, number of beta}. Thus, we have $n \in \omega$ such that
    \begin{align*}
      \ACAo \vdash \forall X,\M_0,\ldots,\M_n(\beta^i_i(\M_0,\ldots,\M_n) \to \exists Y \in \M_n \rho(X,Y)).
    \end{align*}
    To prove $\M' \models \forall X \exists Y \rho$, take an arbitrary $X \in \M'$.
    Then, for sufficiently large $k$, we have $X \in A_k$.
    Within $\M'$, the following properties holds.
    \begin{itemize}
      \item $\beta_e(A_{k+n+1})$ and  $A_{k+n+1} \models \ACAo$,
      \item $A_{k+e+1}$ satisfies $X \in A_k \land \beta^i_i(A_k,\ldots,A_{k+n+1})$.
    \end{itemize}
    Therefore,
      $\M' \models \left[ A_{k + n +1} \models \exists Y \rho(X,Y) \right]$.
      Since $\M' \models \beta_e(A_{k+n+1})$ and $\rho$ is a $\Pi^1_e$ formula,
      $\M' \models \exists Y \rho(X,Y)$.
      Consequently, $\M' \models \forall X \exists Y \rho(X,Y)$ and hence
      $\M' \models \sigma \lor \tau$.
  \end{proof}
  \begin{claim}
    $\M' \models \lnot \sigma \land \lnot \pi$.
  \end{claim}
  \addtocounter{claimcounter}{1}
  \begin{proof}[Proof of the claim]
    Since $\lnot \sigma$ is a $\Pi^1_{e+1}$ sentence true in $\M$ and $\M'$ is a $\beta_e$-submodel of $\M$, $\M'$ also satisfies $\lnot \sigma$.
    Since $\lnot \pi$ is a $\Sigma^1_{e+1}$ sentence true in $A_0$ and $A_0$ is a $\beta_i$-submodel of $\M'$,
    $\M'$ also satisfies $\lnot \pi$.
  \end{proof}
  The above claims complete the proof.
\end{proof}

By a similar argument, we have the following characterization of $\Sigma^1_{e+1}$-consequences of $\Sigma^1_i\hyp\SDC_0$.

\begin{theorem}
  Let $i, e \in \omega$, $e < i$. Then, $\Sigma^1_i\hyp\SDC_0$ is $\Sigma^1_{e+1}$ conservative over 
  $\ACAo + \{\beta^i_e\RFN^-(n) : n \in \omega\}$.
\end{theorem}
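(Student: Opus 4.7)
The plan is to adapt the compactness argument from the preceding theorem, replacing the disjunction $\pi \lor \sigma$ with a single $\Sigma^1_{e+1}$ sentence $\sigma$. Arguing contrapositively, suppose $\sigma \in \Sigma^1_{e+1}$ is provable from $\Sigma^1_i\hyp\SDC_0$ but not from $T := \ACAo + \{\beta^i_e\RFN^-(n) : n \in \omega\}$. Taking a model $\M$ of $T + \lnot \sigma$, we expand the language with set constants $\{C_n\}_{n \in \omega}$ and consider
\[
T' \;=\; T + \lnot \sigma + \{\beta^i_e(C_0, \ldots, C_n) : n \in \omega\}.
\]
Each finite fragment of $T'$ is realized in $\M$ by instantiating a sufficiently large $\beta^i_e\RFN^-(n)$, so by compactness there is a model $\M' = (\N^{\M'}, S^{\M'}, \{A_n\}_{n \in \omega})$ of $T'$.

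Next, set $S' = \bigcup_n \{X \in S^{\M'} : \M' \models X \in A_n\}$ and $\M'' = (\N^{\M'}, S')$. By Lemma~\ref{lem : closed under beta_i} at level $e$, $\M''$ is a $\beta_e$-submodel of $\M'$ and satisfies $\beta_e(A_n)$ for every $n$. Since $\lnot \sigma$ is $\Pi^1_{e+1}$, true in $\M'$, and $\Sigma^1_e$ statements with parameters from $\M''$ are absolute between $\M''$ and $\M'$, we get $\M'' \models \lnot \sigma$.

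To derive the contradiction we show $\M'' \models \sigma$. Write $\sigma \equiv \forall W \exists X\, \theta(X)$ with $W$ a dummy variable and $\theta \in \Pi^1_e \subseteq \Pi^1_i$ (using $e < i$); then Lemma~\ref{lemma, number of beta} yields $n_0 \in \omega$ such that, provably in $\ACAo$, any $\beta^i_i$-chain of length $n_0+1$ containing $W$ supplies a witness for $\exists X\, \theta(X)$ in its top model. Given $W \in \M''$, fix $k$ with $W \in A_k$; the point is that inside $A_{k+n_0+1}$ the sequence $A_k, \ldots, A_{k+n_0}$ satisfies $\beta^i_i$, because the $\beta_i$ clause for the top element $A_{k+n_0}$ is furnished by $\beta^i(A_0, \ldots, A_{k+n_0+1})$ (which gives $\M' \models A_{k+n_0+1} \models \beta_i(A_{k+n_0})$), while the remaining clauses of $\beta^i(A_k, \ldots, A_{k+n_0})$ are arithmetical in the codes and hence absolute between $\M'$ and its coded submodel $A_{k+n_0+1}$. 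The formalized Lemma~\ref{lemma, number of beta} then gives $A_{k+n_0+1} \models \exists X\, \theta(X)$, and since $\theta \in \Pi^1_e$ and $\M'' \models \beta_e(A_{k+n_0+1})$, this reflects to $\M'' \models \exists X\, \theta(X)$, i.e.\ $\M'' \models \sigma$.

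The only real subtlety, exactly as in the preceding theorem, is the internal verification that the chain $A_k, \ldots, A_{k+n_0}$ is a genuine $\beta^i_i$-chain from the viewpoint of $A_{k+n_0+1}$; this is a matter of unpacking what $\beta^i_e(A_0, \ldots, A_{k+n_0+1})$ asserts and observing that the intermediate clauses are arithmetical in the reals involved and therefore absolute. Everything else in the argument is routine bookkeeping transferred from the proof of the previous theorem.
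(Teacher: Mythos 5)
Your proof is correct and is essentially the argument the paper intends: the paper gives no explicit proof here beyond ``by a similar argument,'' and your proposal is a faithful specialization of the preceding theorem's compactness construction (dropping the $\pi$-disjunct, keeping the chain constants $C_n$, the cut-down model via Lemma~\ref{lem : closed under beta_i}, and the internal $\beta^i_i$-chain verification feeding Lemma~\ref{lemma, number of beta}). The only remark worth making is that the detour through $\M''$ is not actually needed once $\pi$ is gone, since $\M' \models \lnot\sigma$ is an axiom and $\M' \models \beta_e(A_{k+n_0+1})$ already yields $\M' \models \sigma$ directly; but this is a simplification, not a correction.
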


From the $\Pi^1_4$ equivalence of $\Sigma^1_i\SDC_0$ and $\Pi^1_i\hyp\CAo$, we also have the following.
\begin{corollary}
  Let $i, e \in \omega$, $e < i$ and $e < 2$. Then, 
  $\Pi^1_e\hyp\CAo$ is $\Sigma^1_{e+1}$ conservative over $\ACAo + \{\beta^i_e\RFN^-(n) : n \in \omega\}$.
\end{corollary}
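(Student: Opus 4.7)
The plan is to deduce the corollary directly from the preceding theorem by passing through Simpson's $\Pi^1_4$-conservation result \cite[VII.6.20]{Simpson}, which is the ``$\Pi^1_4$ equivalence of $\Sigma^1_i\hyp\SDC_0$ and $\Pi^1_i\hyp\CAo$'' advertised in the sentence just above the statement. Throughout I read the target theory as $\Pi^1_i\hyp\CAo$ (which is what Simpson's conservation speaks to); the statement for $\Pi^1_e\hyp\CAo$ follows \emph{a fortiori}, since $e < i$ gives $\Pi^1_e\hyp\CAo \subseteq \Pi^1_i\hyp\CAo$.

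The first ingredient is a complexity count. The restriction $e < 2$ forces $\Sigma^1_{e+1} \subseteq \Sigma^1_2 \subseteq \Pi^1_4$. Unfolding the definition, $\beta^i_e\RFN^-(n)$ has the form $\exists Y_0,\ldots,Y_n\,(\theta \land \beta_e(Y_n))$, with $\theta$ arithmetical in $Y_0,\ldots,Y_n$ (the conjunct ``$Y_n \models \beta_i(Y_m)$'' is the relativization of a $\Pi^1_i$ formula to $Y_n$, hence arithmetical in $Y_n,Y_m$). Consequently each $\beta^i_e\RFN^-(n)$ lies in $\Sigma^1_{e+1} \subseteq \Pi^1_4$. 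Since iterated application of the $\beta_i$-model reflection (Theorem \ref{thm equiv of SDC and beta}) shows that $\Sigma^1_i\hyp\SDC_0 \vdash \beta^i_e\RFN^-(n)$, Simpson's result gives $\Pi^1_i\hyp\CAo \vdash \beta^i_e\RFN^-(n)$, so $\ACAo + \{\beta^i_e\RFN^-(n) : n \in \omega\}$ is indeed a subtheory of $\Pi^1_i\hyp\CAo$.

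For the conservation direction, fix $\sigma \in \Sigma^1_{e+1}$ with $\Pi^1_i\hyp\CAo \vdash \sigma$. Since $\Sigma^1_i\hyp\SDC_0$ extends $\Pi^1_i\hyp\CAo$ --- again by Theorem \ref{thm equiv of SDC and beta}, as any $\beta_i$-model is closed under $\Pi^1_i$-comprehension --- we obtain $\Sigma^1_i\hyp\SDC_0 \vdash \sigma$. The preceding theorem then yields $\ACAo + \{\beta^i_e\RFN^-(n) : n \in \omega\} \vdash \sigma$, as desired.

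No serious obstacle arises: the substantive content has already been absorbed into the preceding theorem's compactness argument and into Simpson's classical conservation theorem, and the present corollary is pure packaging. The only subtlety is the complexity bookkeeping: the hypothesis $e < 2$ is precisely what guarantees that both the target sentences in $\Sigma^1_{e+1}$ and the base-theory axioms $\beta^i_e\RFN^-(n)$ sit inside $\Pi^1_4$, so that Simpson's theorem is applicable at both ends of the transfer.
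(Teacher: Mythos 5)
Your argument is correct and matches the paper's intent: the corollary is stated without proof as an immediate consequence of the preceding theorem together with Simpson's $\Pi^1_4$-conservation of $\Sigma^1_i\hyp\SDC_0$ over $\Pi^1_i\hyp\CAo$, and that is exactly the route you take (including the sensible reading of the target theory as $\Pi^1_i\hyp\CAo$ and the complexity check that $e<2$ puts both the axioms $\beta^i_e\RFN^-(n)$ and the target sentences inside $\Pi^1_4$). One small quibble that does not affect the proof: the inclusion $\Pi^1_i\hyp\CAo \subseteq \Sigma^1_i\hyp\SDC_0$ is correct, but not because a coded $\beta_i$-model is closed under $\Pi^1_i$-comprehension (it need not be); rather, for a $\Pi^1_i$ formula $\phi$ the set $\{n : \phi(n)\}$ is arithmetically definable from the code of any $\beta_i$-model containing the parameters, so $\beta_i$-model reflection plus arithmetical comprehension yields $\Pi^1_i$-comprehension.
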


\bibliographystyle{plain}
\bibliography{references}

\end{document}